\numberwithin{equation}{section}
\newtheorem{Theorem}{Theorem}[section]
\newtheorem{Corollary}[Theorem]{Corollary}
\newtheorem{Lemma}[Theorem]{Lemma}
\newtheorem{Proposition}[Theorem]{Proposition}
\newtheorem{Observation}[Theorem]{Observation}
 { \theoremstyle{definition}
\newtheorem{Definition}[Theorem]{Definition}

 }
\newcommand{\Wr}{\textrm{\upshape{Wr}}}
\newcommand{\He}{\textrm{\upshape{He}}}
\renewcommand{\Im}{\textrm{\upshape{Im}}\,}
\newcommand{\By}[2]{\overset{\mbox{\tiny{#1}}}{#2}}
\newcommand{\ByRef}[2]{ \By{\eqref{#1}}{#2} }
\newcommand{\eqByRef}[1]{ \ByRef{#1}{=} }
\begin{document}

\newcommand{\arXivNumber}{2006.15534}

\renewcommand{\PaperNumber}{003}

\FirstPageHeading

\ShortArticleName{The Expansion of Wronskian Hermite Polynomials in the Hermite Basis}

\ArticleName{The Expansion of Wronskian Hermite Polynomials\\ in the Hermite Basis}

\Author{Codru\c{t} GROSU~$^\dag$ and Corina GROSU~$^\ddag$}

\AuthorNameForHeading{C.~Grosu and C.~Grosu}

\Address{$^\dag$~Google Z\"urich, Brandschenkestrasse 110, Z\"urich, Switzerland}
\EmailD{\href{mailto:grosu.codrut@gmail.com}{grosu.codrut@gmail.com}}

\Address{$^\ddag$~Department of Applied Mathematics, Politehnica University of Bucharest,\\
\hphantom{$^\ddag$}~Splaiul Independentei 313, Bucharest, Romania}
\EmailD{\href{mailto:cgr90@yahoo.com}{cgr90@yahoo.com}}

\ArticleDates{Received July 08, 2020, in final form January 04, 2021; Published online January 09, 2021}

\Abstract{We express Wronskian Hermite polynomials in the Hermite basis and obtain an explicit formula for the coefficients. From this we deduce an upper bound for the modulus of the roots in the case of partitions of length~2. We also derive a general upper bound for the modulus of the real and purely imaginary roots. These bounds are very useful in the study of irreducibility of Wronskian Hermite polynomials. Additionally, we generalize some of our results to a larger class of polynomials.}

\Keywords{Wronskian; Hermite polynomials; Schr\"odinger operator}

\Classification{26C10; 30C15; 34L40}

\section{Introduction}

Let $\{H_n(x)\}_{n \geq 0}$ be the classical Hermite polynomials, solutions to the equation $y''(x)-2xy'(x)+2ny(x)=0$. In this paper we study the Wronskian of such polynomials: if $n_1 < n_2 < \dots < n_r$ is a~sequence of non-negative integers, we can define the Wronskian $\Wr[H_{n_1}(x), H_{n_2}(x), \dots, H_{n_r}(x)]$ as the determinant
\[
\Wr[H_{n_1}(x), H_{n_2}(x), \dots, H_{n_r}(x)] =
\begin{vmatrix}
H_{n_1}(x) & H_{n_2}(x) & \cdots & H_{n_r}(x) \\
H_{n_1}'(x) & H_{n_2}'(x) & \cdots & H_{n_r}'(x) \\
\vdots & \vdots & \ddots & \vdots \\
H_{n_1}^{(r-1)}(x) & H_{n_2}^{(r-1)}(x) & \cdots & H_{n_r}^{(r-1)}(x)
\end{vmatrix}.
\]

Wronskians of Hermite polynomials appear in the study of rational potentials admitted by the Schr\"odinger operator $L = -\frac{\partial^2}{\partial x^2} + V(x)$. They are also used to define exceptional Hermite polynomials, a subclass of the widely studied exceptional orthogonal polynomials \cite{UllateMilson}. When the sequence is $m, m+1, \dots, m+n-1$, these polynomials are called \textit{generalized Hermite polynomials}, and form rational solutions to the fourth Painlev\'e equation \cite{Clarkson, NoumiYamada}. Recurrence relations for Wronskian Hermite polynomials were established in \cite{BonneuxStevens,UllateKasman}, and invariance properties were determined in \cite{UllateGrandati}.

Oblomkov \cite{Oblomkov} characterized rational potentials of monodromy-free Schr\"odinger operators that grow as $x^2$ at infinity. In this case, the potentials have the form
\begin{equation}\label{eq:oblomkov}
V(x) = -2\frac{\partial^2}{\partial x^2}\log \Wr[H_{n_1}(x), H_{n_2}(x), \dots, H_{n_r}(x)] + x^2.
\end{equation}
Wronskians of Hermite polynomials also provide rational solutions to the fourth Painlev\'e equation and its higher order generalizations. In fact, as shown in \cite{Ullate2020}, all rational solutions can be expressed using such Wronskians.

From \eqref{eq:oblomkov} the zeros of $\Wr[H_{n_1}(x), H_{n_2}(x), \dots, H_{n_r}(x)]$ are precisely the poles of the potential. Because of this relationship it is important to understand the geometry of the zeros of the Wronskian \cite{BonneuxCoefficients, Clarkson, Clarkson20, FelderHemeryVeselov12}. Unfortunately not much is known about the set of zeros. Veselov (see~\cite{FelderHemeryVeselov12}) conjectured that all the zeros are simple, except possibly at the origin. This conjecture is known to be true in a few special cases, but in general it is still open. In contrast, for the Hermite polynomials $H_n(x)$ it is well known that the zeros are real and simple. The asymptotic behavior of the zeros was studied in~\cite{FelderHemeryVeselov12} for $2$-term Wronskians, and in~\cite{KuijlaarsWilson15} in general. However, the results in~\cite{KuijlaarsWilson15} for non-real zeros depend on the assumption of simplicity.

It turns out it is useful to define the Wronskian polynomials in terms of partitions.

Let $\lambda = (\lambda_1 \geq \lambda_2 \geq \dots \geq \lambda_r)$ be any partition. We define the \textit{degree vector of $\lambda$} as $n_\lambda := (\lambda_r, \lambda_{r-1}+1, \dots, \lambda_1+r-1)$. Furthermore, let
\[
\Delta(x_1, x_2, \dots, x_r) := \det\big[x_i^{j-1}\big]_{1 \leq i, j \leq r} = \prod_{j>i}(x_j-x_i)
\]
be the Vandermonde determinant, with $\Delta(x_1) = 1$.

We shall consider a rescaled version of $H_n(x)$, defined by $\He_n(x) = 2^{-\frac{n}{2}}H_n\big(\frac{x}{\sqrt{2}}\big)$. These are the probabilistic Hermite polynomials, solution to the equation $y''(x)-xy'(x)+ny(x)=0$.
\begin{Definition}
\label{def:wronskian}
For any partition $\lambda \vdash n$ we define the Wronskian Hermite polynomial associated to $\lambda$ as
\[
\He_{\lambda}(x) := \frac{\Wr[\He_{n_1}(x), \He_{n_2}(x), \dots, \He_{n_r}(x)]}{\Delta(n_\lambda)},
\]
where $n_\lambda = (n_1, n_2, \dots, n_r)$ is the degree vector of $\lambda$.
\end{Definition}
Then $\He_\lambda(x)$ is a monic polynomial of degree $n$. Furthermore, $\He_\lambda(x)$ has up to scaling the same set of zeros as the Wronskian of $\{H_{n_1}(x), H_{n_2}(x), \dots, H_{n_r}(x)\}$.

Recently, Bonneux, Dunning and Stevens \cite{BonneuxCoefficients}, following earlier work \cite{BonneuxAppell}, found an explicit formula for the coefficients of $\He_\lambda(x)$ in terms of the characters of the symmetric group.
\begin{Theorem}[{\cite[Theorem~2]{BonneuxCoefficients}}] \label{thm:bonneux}
Let $\lambda \vdash n$. Then
\[
\He_\lambda(x) = \sum_{k=0}^{\lfloor n/2 \rfloor} (-1)^k H(\lambda)\frac{\chi^\lambda\big(2^k1^{n-2k}\big)}{2^kk!(n-2k)!}x^{n-2k},
\]
where $\chi^\lambda$ is the irreducible character associated to the partition $\lambda$, $\big(2^k1^{n-2k}\big)$ is the conjugacy class of $k$ disjoint transpositions, and $H(\lambda) := \frac{n!}{\chi^\lambda(1)}$.
\end{Theorem}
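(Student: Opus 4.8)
The plan is to recognize $\He_\lambda(x)$ as a single Schur function evaluated at a distinguished point of the algebra of symmetric functions, and then to expand that Schur function in the power-sum basis, where the expansion coefficients are exactly the characters $\chi^\lambda$.

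First I would set $\psi_n(x) := \He_n(x)/n!$. The generating function $\sum_{n\ge 0}\He_n(x)t^n/n! = e^{xt-t^2/2}$ gives $\sum_{n\ge0}\psi_n(x)t^n = \exp\!\big(xt-t^2/2\big)$, so, comparing with $\sum_{n\ge0}h_n z^n = \exp\!\big(\sum_{k\ge1}p_k z^k/k\big)$, the polynomial $\psi_n(x)$ is precisely the complete homogeneous symmetric function $h_n$ under the specialization $p_1 \mapsto x$, $p_2 \mapsto -1$, $p_k \mapsto 0$ for $k\ge3$. Moreover $\He_n'(x) = n\He_{n-1}(x)$ yields $\psi_n'(x) = \psi_{n-1}(x)$, hence $\psi_n^{(i-1)}(x) = \psi_{n-i+1}(x)$, with the conventions $\psi_0 = 1$ and $\psi_m = 0$ for $m<0$.

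Next, pulling the scalar $n_j!$ out of the $j$-th column of the Wronskian determinant and using the previous relation,
\[
\Wr[\He_{n_1},\dots,\He_{n_r}] = \Big(\prod_{j=1}^r n_j!\Big)\det\!\big[\psi_{n_j-i+1}(x)\big]_{1\le i,j\le r}.
\]
Since $n_j = \lambda_{r+1-j}+j-1$, transposing this matrix and reversing both rows and columns (a reindexing with accumulated sign $+1$) identifies the last determinant with the Jacobi--Trudi determinant $\det[\psi_{\lambda_i-i+j}]_{1\le i,j\le r}$, which equals $s_\lambda$ under the above specialization. Dividing by $\Delta(n_\lambda)$ and invoking the determinantal (hook-length) formula $\chi^\lambda(1) = n!\,\Delta(n_\lambda)/\prod_j n_j!$ — valid because $n_1<\dots<n_r$ are exactly the first-column hook lengths $\lambda_i+r-i$ of $\lambda$ — we obtain
\[
\He_\lambda(x) = H(\lambda)\cdot s_\lambda\big|_{p_1=x,\ p_2=-1,\ p_k=0\ (k\ge3)}.
\]

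Finally I would apply the Frobenius character formula $s_\lambda = \sum_{\mu\vdash n} z_\mu^{-1}\chi^\lambda(\mu)\,p_\mu$, where $p_\mu = \prod_i p_{\mu_i}$ and $z_\mu = \prod_i i^{m_i}m_i!$ if $\mu$ has $m_i$ parts equal to $i$. Under our specialization the only partitions $\mu$ that survive are $\mu = (2^k1^{n-2k})$ with $0\le k\le\lfloor n/2\rfloor$, for which $z_\mu = 2^k k!\,(n-2k)!$ and $p_\mu \mapsto (-1)^k x^{n-2k}$; substituting and reading off the coefficient of each power of $x$ gives exactly the stated formula. I expect the only genuinely delicate point to be the index bookkeeping in the step identifying the Wronskian determinant with the Jacobi--Trudi determinant — namely making the row/column reversals and the transpose line up the degree vector $n_\lambda$ with the shifted parts $\lambda_i+r-i$, and checking that the total sign is $+1$; the remaining ingredients are standard identities from the theory of symmetric functions.
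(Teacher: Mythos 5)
Your proof is correct, and since the paper imports Theorem~\ref{thm:bonneux} from \cite[Theorem~2]{BonneuxCoefficients} without giving a proof, there is no internal argument to compare against; your route --- realizing $\He_\lambda = H(\lambda)\,s_\lambda$ under the specialization $p_1\mapsto x$, $p_2\mapsto -1$, $p_k\mapsto 0$ for $k\ge 3$ via the Jacobi--Trudi identity, then expanding $s_\lambda$ in the power-sum basis --- is essentially the one used in that reference and in \cite{BonneuxAppell}. The delicate points you flag both check out: the transpose plus double reversal contributes sign $(-1)^{2\binom{r}{2}}=+1$, the entries of $n_\lambda$ are indeed the first-column hook lengths $\lambda_i+r-i$ so that $\chi^\lambda(1)=n!\,\Delta(n_\lambda)/\prod_j n_{\lambda,j}!$ is exactly \eqref{eq:hnfactorial}, and $z_{(2^k1^{n-2k})}=2^k k!\,(n-2k)!$ with $p_{(2^k1^{n-2k})}\mapsto(-1)^k x^{n-2k}$ gives the stated coefficients.
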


Our main contribution in this paper is to establish a dual version of Theorem~\ref{thm:bonneux}, where we determine the coefficients of $\He_\lambda(x)$ in the Hermite basis.
\begin{Theorem}\label{thm:main}
Let $\lambda \vdash n$. Then
\[
\He_\lambda(x) = \sum_{k=0}^{\lfloor n/2 \rfloor} \frac{H(\lambda)}{k!(n-2k)!}K_{\lambda'\mu^{(k)}}\He_{n-2k}(x),
\]
where $\mu^{(k)} :=\big(2^k1^{n-2k}\big)$, $\lambda'$ is the conjugate partition of $\lambda$, and $K_{\lambda\mu}$ are the Kostka numbers.
\end{Theorem}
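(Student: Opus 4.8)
The plan is to start from Theorem~\ref{thm:bonneux} and convert from the monomial basis to the Hermite basis by means of the classical inversion formula
\[
x^m = \sum_{i \geq 0} \frac{m!}{2^i\, i!\, (m-2i)!}\, \He_{m-2i}(x),
\]
which follows immediately from $\He_m(x) = \sum_{i} \frac{(-1)^i m!}{2^i i! (m-2i)!} x^{m-2i}$ (equivalently from the generating function $\sum_m \He_m(x) t^m/m! = e^{xt - t^2/2}$). Substituting this into Theorem~\ref{thm:bonneux}, writing the running index there as $j$ and the inner index as $k-j$, the factors $(n-2j)!$ cancel and one collects the coefficient of $\He_{n-2k}(x)$:
\[
\He_\lambda(x) = \sum_{k=0}^{\lfloor n/2\rfloor} \frac{H(\lambda)}{2^k\, k!\, (n-2k)!}\left(\sum_{j=0}^{k} (-1)^j\binom{k}{j}\chi^\lambda\big(2^j1^{n-2j}\big)\right)\He_{n-2k}(x).
\]
Comparing with the asserted formula, Theorem~\ref{thm:main} reduces to the purely combinatorial identity
\[
K_{\lambda'\mu^{(k)}} = \frac{1}{2^k}\sum_{j=0}^{k} (-1)^j\binom{k}{j}\chi^\lambda\big(2^j1^{n-2j}\big), \qquad 0 \le k \le \lfloor n/2\rfloor .
\]

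To establish this I would pass to the ring of symmetric functions equipped with the Hall inner product $\langle\cdot,\cdot\rangle$, for which $\langle s_\lambda, p_\rho\rangle = \chi^\lambda(\rho)$ for every partition $\rho \vdash n$. Since $\chi^\lambda\big(2^j1^{n-2j}\big) = \langle s_\lambda,\, p_2^j p_1^{n-2j}\rangle$, the right-hand side above equals
\[
\frac{1}{2^k}\left\langle s_\lambda,\ p_1^{n-2k}\sum_{j=0}^k (-1)^j\binom{k}{j} p_2^j p_1^{2k-2j}\right\rangle = \frac{1}{2^k}\big\langle s_\lambda,\ p_1^{n-2k}(p_1^2-p_2)^k\big\rangle .
\]
Now $p_1 = e_1$ and $e_2 = \tfrac12(p_1^2 - p_2)$, so $2^{-k}p_1^{n-2k}(p_1^2-p_2)^k = e_1^{n-2k}e_2^k = e_{\mu^{(k)}}$, and the expression is $\langle s_\lambda, e_{\mu^{(k)}}\rangle$. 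Applying the involution $\omega$ (an isometry with $\omega s_\lambda = s_{\lambda'}$ and $\omega e_\mu = h_\mu$) gives $\langle s_\lambda, e_{\mu^{(k)}}\rangle = \langle s_{\lambda'}, h_{\mu^{(k)}}\rangle$, and the expansion $h_\mu = \sum_\nu K_{\nu\mu} s_\nu$ of complete homogeneous symmetric functions into Schur functions yields $\langle s_{\lambda'}, h_{\mu^{(k)}}\rangle = K_{\lambda'\mu^{(k)}}$, which is exactly what is needed.

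I expect the only genuine subtlety to be the bookkeeping in the first step: keeping the two summation ranges consistent under the reindexing $k = i+j$, and noting that the sign $(-1)^k$ is already built into Theorem~\ref{thm:bonneux}, so that after substitution the signs assemble into the alternating binomial sum above rather than cancelling. The symmetric-function part is routine once one recalls the three standard facts used --- the character/power-sum pairing, the identity $e_2 = \tfrac12(p_1^2-p_2)$, and that $\omega$ interchanges the $e$- and $h$-bases while preserving $\langle\cdot,\cdot\rangle$ --- so the crux is really the elementary computation $p_1^{n-2k}(p_1^2 - p_2)^k = 2^k e_{\mu^{(k)}}$.
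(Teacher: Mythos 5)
Your proposal is correct, and its first half --- the base change $x^m = \sum_i \frac{m!}{2^i i!(m-2i)!}\He_{m-2i}(x)$ followed by the reindexing $k=i+j$, reducing everything to the identity $2^kK_{\lambda'\mu^{(k)}} = \sum_{j=0}^k(-1)^j\binom{k}{j}\chi^\lambda\big(2^j1^{n-2j}\big)$ --- coincides exactly with the paper's Lemma~\ref{lem:basechange} and the computation of $b_k$ that follows it. Where you genuinely diverge is in the proof of that character identity (the paper's Lemma~\ref{lem:kostka}). The paper works concretely in $n$ variables: it invokes the Frobenius character formula to realize $\chi^\lambda\big(\mu^{(j)}\big)$ as the coefficient of the monomial $x_1^{\lambda_1+n-1}\cdots x_n^{\lambda_n}$ in $p_{\mu^{(j)}}W_0$, performs the same central computation $p_1^{n-2k}\big(p_1^2-p_2\big)^k = 2^k e_{\mu^{(k)}}$, applies Pieri's rule $e_\mu = \sum_\rho K_{\rho'\mu}s_\rho$, and then needs an explicit permutation argument (plus some care with zero parts and with $\ell(\rho)\le n$) to show that only $\rho=\lambda$ can contribute the target monomial. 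Your Hall-inner-product formulation replaces that entire last step by the orthonormality of the Schur basis: $\chi^\lambda(\rho)=\langle s_\lambda,p_\rho\rangle$, then $\langle s_\lambda,e_{\mu^{(k)}}\rangle=\langle s_{\lambda'},h_{\mu^{(k)}}\rangle=K_{\lambda'\mu^{(k)}}$ via the involution $\omega$ --- which is just the dual form of the paper's Theorem~\ref{thm:pieri}. Both proofs hinge on the same elementary identity $p_1^{n-2k}\big(p_1^2-p_2\big)^k=2^ke_{\mu^{(k)}}$; yours is shorter and avoids the monomial-extraction bookkeeping, at the cost of importing the standard machinery of the Hall inner product and $\omega$, while the paper's stays closer to the concrete determinantal setup it already has in place for Wronskians.
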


In \cite{KasmanMilson}, exceptional Hermite polynomials are written as a linear combination of Hermite polynomials (see formula $(101)$). However, the coefficients $v_j^{(\lambda)}(n)$ of the expansion are determined using an algorithm. Theorem~\ref{thm:main} shows that $v_j^{(\lambda)}(n) = (-1)^j2^j\frac{H(\lambda)}{j!(n-2j)!}K_{\lambda'\mu^{(j)}}$ (compare with Examples~7.1 and~7.2 in~\cite{KasmanMilson}).

Our interest in Theorem~\ref{thm:main} stems from the need to find good bounds for the modulus of the roots, proportional to $\sqrt{n}$. For Hermite polynomials, Szeg{\H o} proved the following.
\begin{Theorem}[Szeg{\H o}, \cite{Szego75}]
\label{thm:szego}
If $z$ is a root of $H_n(x)$ then $|z| \leq \frac{\sqrt{2}(n-1)}{\sqrt{n+2}}$.
\end{Theorem}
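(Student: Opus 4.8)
The plan is to extract two ``electrostatic'' identities for the zeros of $H_n$ from the differential equation and then combine them with the Cauchy--Schwarz inequality. For $n\ge 2$ the zeros $z_1,\dots,z_n$ of $H_n$ are real and simple (as recalled above), so write $H_n(x)=c\prod_{j=1}^n(x-z_j)$, fix a zero $z_k$, and set $q(x):=\prod_{j\ne k}(x-z_j)$, so that $H_n(x)=c\,(x-z_k)q(x)$. By the Leibniz rule, $H_n^{(m)}(z_k)=c\,m\,q^{(m-1)}(z_k)$; in particular $H_n'(z_k)=c\,q(z_k)$, $H_n''(z_k)=2c\,q'(z_k)$ and $H_n'''(z_k)=3c\,q''(z_k)$. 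Hence $\frac{H_n''(z_k)}{H_n'(z_k)}=\frac{2q'(z_k)}{q(z_k)}=2\sum_{j\ne k}\frac{1}{z_k-z_j}$, and, using $\frac{q''}{q}=\big(\frac{q'}{q}\big)'+\big(\frac{q'}{q}\big)^2$ together with the previous line, $\frac{H_n'''(z_k)}{H_n'(z_k)}=\frac{3q''(z_k)}{q(z_k)}=3\Big(z_k^2-\sum_{j\ne k}\frac{1}{(z_k-z_j)^2}\Big)$.

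Now I would feed in the equation $H_n''=2xH_n'-2nH_n$: at $x=z_k$ it gives $H_n''(z_k)=2z_kH_n'(z_k)$, hence $\sum_{j\ne k}\frac{1}{z_k-z_j}=z_k$. Differentiating the equation once more yields $H_n'''=2xH_n''-(2n-2)H_n'$, so $H_n'''(z_k)=(4z_k^2-2n+2)H_n'(z_k)$; comparing this with the second expression from the previous paragraph and simplifying gives
\[
\sum_{j\ne k}\frac{1}{(z_k-z_j)^2}=\frac{2(n-1)-z_k^2}{3}.
\]
(Both identities can be checked by hand on $H_2$ and $H_3$.) Finally, Cauchy--Schwarz applied to the $n-1$ numbers $(z_k-z_j)^{-1}$ gives
\[
z_k^2=\Bigg(\sum_{j\ne k}\frac{1}{z_k-z_j}\Bigg)^{2}\le(n-1)\sum_{j\ne k}\frac{1}{(z_k-z_j)^2}=(n-1)\,\frac{2(n-1)-z_k^2}{3},
\]
and rearranging this into $(n+2)z_k^2\le 2(n-1)^2$ yields $|z_k|\le\frac{\sqrt2\,(n-1)}{\sqrt{n+2}}$, which is the claimed bound. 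Note it holds for every zero, not just the extreme ones, and the case $n\le 1$ is trivial.

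The only delicate part is the bookkeeping in the first paragraph: pinning down the constants in $H_n''(z_k)=2c\,q'(z_k)$ and $H_n'''(z_k)=3c\,q''(z_k)$, and correctly rewriting $q''(z_k)/q(z_k)$ as $z_k^2-\sum_{j\ne k}(z_k-z_j)^{-2}$. Once the two zero-identities are in place the rest is a two-line computation; moreover equality in Cauchy--Schwarz forces all the differences $z_k-z_j$ ($j\ne k$) to be equal, so for $n\ge 3$ (distinct zeros) the inequality is strict, while for $H_2$ it is an equality and the bound is sharp. One could instead run the whole argument for the probabilists' polynomials $\He_n$ and rescale, but working with $H_n$ directly is more natural here since Szeg\H{o}'s bound is phrased for $H_n$.
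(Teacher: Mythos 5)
Your proof is correct. Note, however, that the paper itself offers no proof of this statement: it is quoted as a known result with a citation to Szeg\H{o}'s book, so there is nothing in the paper to compare against line by line. Your argument is in fact the classical one (due to Laguerre, and the one Szeg\H{o} presents): the two identities $\sum_{j\ne k}(z_k-z_j)^{-1}=z_k$ and $\sum_{j\ne k}(z_k-z_j)^{-2}=\tfrac{1}{3}\big(2(n-1)-z_k^2\big)$ follow exactly as you derive them from $H_n''=2xH_n'-2nH_n$ and its derivative evaluated at a zero, your constants $H_n''(z_k)=2c\,q'(z_k)$ and $H_n'''(z_k)=3c\,q''(z_k)$ check out, and Cauchy--Schwarz then gives $(n+2)z_k^2\le 2(n-1)^2$ as claimed. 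The identities also verify numerically on $H_2$ and $H_3$. So this is a complete, self-contained proof of the cited theorem, and your closing remark about when equality can occur (only for $n=2$) is a correct bonus observation.
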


Theorem~\ref{thm:main} gives asymptotically good bounds for the modulus of zeros, in the case when $\lambda=(\lambda_1, \lambda_2)$ and $\lambda_2$ is fixed.
\begin{Corollary}
\label{cor:2parts}
Let $\lambda = (\lambda_1, \lambda_2) \vdash n$. If $z$ is a root of $\He_\lambda(x)$ then $|z| < 2\big(\sqrt{{\rm e}}\lambda_2+\sqrt{2}\big)\sqrt{\lambda_1+1}$.
\end{Corollary}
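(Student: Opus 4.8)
\emph{Proof strategy.}
The plan is to start from Theorem~\ref{thm:main}, specialised to $\lambda=(\lambda_1,\lambda_2)$, and to show that the coefficients of $\He_\lambda$ in the Hermite basis decay fast enough that a zero $z$ with $|z|$ large would force $\He_\lambda(z)\neq0$. Write $\He_\lambda(x)=\sum_{k=0}^{\lfloor n/2\rfloor}c_k\He_{n-2k}(x)$ with $c_k=\frac{H(\lambda)}{k!(n-2k)!}K_{\lambda'\mu^{(k)}}$; here $c_0=1$ since $K_{\lambda'(1^n)}=f^{\lambda'}=f^\lambda=n!/H(\lambda)$, and for $\lambda=(\lambda_1,\lambda_2)$ the conjugate $\lambda'=\bigl(2^{\lambda_2}1^{\lambda_1-\lambda_2}\bigr)$ has only two columns. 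I expect the main difficulty to be Step~1: $H(\lambda)$ and $K_{\lambda'\mu^{(k)}}$ are separately as large as $\lambda_1!\,\lambda_2!$, so one must identify the cancellation between them and obtain a clean formula for $c_k$.

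\emph{Step 1 (the coefficients).} Using the dual Jacobi--Trudi identity $s_{\lambda'}=e_{\lambda_1}e_{\lambda_2}-e_{\lambda_1+1}e_{\lambda_2-1}$, the relation $K_{\nu\mu}=\langle s_\nu,h_\mu\rangle$, and the classical fact that $\langle e_\alpha,h_\beta\rangle$ equals the number of $0/1$ matrices with row sums $\alpha$ and column sums $\beta$ (equivalently, a direct two-column semistandard tableau argument reducing to a count of ballot sequences), one gets
\[
K_{\lambda'\mu^{(k)}}=\binom{n-2k}{\lambda_2-k}-\binom{n-2k}{\lambda_2-k-1}.
\]
Combining this with the hook-length value $H(\lambda)=\lambda_2!\,(\lambda_1+1)!/(\lambda_1-\lambda_2+1)$ collapses the product to the clean expression
\[
c_k=\binom{\lambda_2}{k}\frac{(\lambda_1+1)!}{(\lambda_1-k+1)!}\ \le\ \binom{\lambda_2}{k}(\lambda_1+1)^k\ \le\ \frac{\bigl(\lambda_2(\lambda_1+1)\bigr)^k}{k!}.
\]

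\emph{Step 2 (ratios of consecutive Hermite polynomials).} Since $\He_m'=m\He_{m-1}$ and the zeros $\zeta$ of $\He_m$ are real with $|\zeta|\le\frac{2(m-1)}{\sqrt{m+2}}<2\sqrt n$ for $1\le m\le n$ (Theorem~\ref{thm:szego}, after the rescaling $\He_n(x)=2^{-n/2}H_n(x/\sqrt2)$), the logarithmic derivative gives, for all $z$ with $|z|>2\sqrt n$,
\[
\left|\frac{\He_{m-1}(z)}{\He_m(z)}\right|=\frac1m\Bigl|\sum_{\zeta}\frac1{z-\zeta}\Bigr|\le\frac1{|z|-2\sqrt n},
\]
and telescoping over the $2k$ consecutive indices $n,n-1,\dots,n-2k+1$ yields $\bigl|\He_{n-2k}(z)/\He_n(z)\bigr|\le(|z|-2\sqrt n)^{-2k}$ for $1\le k\le\lfloor n/2\rfloor$.

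\emph{Step 3 (conclusion).} Let $z$ be a zero of $\He_\lambda$. If $|z|\le2\sqrt n$ we are done, since $n\le2\lambda_1$ gives $2\sqrt n<2\sqrt2\sqrt{\lambda_1+1}<2\bigl(\sqrt{\rm e}\,\lambda_2+\sqrt2\bigr)\sqrt{\lambda_1+1}$. Otherwise $|z|>2\sqrt n$ exceeds every Szeg\H{o} radius, so $\He_n(z)\neq0$; dividing $\sum_k c_k\He_{n-2k}(z)=0$ by $\He_n(z)$ and using $c_0=1$, the triangle inequality, and Steps~1--2 gives
\[
1\le\sum_{k\ge1}\frac{c_k}{(|z|-2\sqrt n)^{2k}}\le\sum_{k\ge1}\frac1{k!}\Bigl(\frac{\lambda_2(\lambda_1+1)}{(|z|-2\sqrt n)^2}\Bigr)^{\!k}=\exp\!\Bigl(\frac{\lambda_2(\lambda_1+1)}{(|z|-2\sqrt n)^2}\Bigr)-1,
\]
hence $(|z|-2\sqrt n)^2\le\lambda_2(\lambda_1+1)/\ln2$ and $|z|\le2\sqrt n+\sqrt{\lambda_2(\lambda_1+1)/\ln2}$. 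The elementary bounds $2\sqrt n\le2\sqrt2\sqrt{\lambda_1+1}$ and $\sqrt{\lambda_2}/\sqrt{\ln2}\le\lambda_2/\sqrt{\ln2}<2\sqrt{\rm e}\,\lambda_2$ then finish the proof. (Replacing the last estimate of the exponential series by the cruder $\sum_{k\ge1}x^k/k!\le x\,{\rm e}^x<1$ for $x<1/{\rm e}$ yields $(|z|-2\sqrt n)^2<{\rm e}\,\lambda_2(\lambda_1+1)$, which reproduces the constants in the statement verbatim.)
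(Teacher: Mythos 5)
Your proof is correct, and while it shares the paper's combinatorial core it replaces the analytic half with a genuinely different argument. The closed form you obtain for the coefficients, $c_k=\binom{\lambda_2}{k}\frac{(\lambda_1+1)!}{(\lambda_1-k+1)!}$, is exactly the paper's $b_k=\frac{(\lambda_1+1)!\,\lambda_2!}{(\lambda_1-k+1)!\,(\lambda_2-k)!\,k!}$; the paper reaches it by recognizing $K_{\lambda'\mu^{(k)}}$ as the dimension $F_{(\lambda_1-k,\lambda_2-k)}$ and applying \eqref{eq:hnfactorial}, whereas you go through dual Jacobi--Trudi and the Hall inner product --- both routes are fine, and the vanishing for $k>\lambda_2$ comes out either way. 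The real divergence is afterwards: the paper bounds $b_k$ via Stirling and feeds the $2k$-th roots into Tur\'an's bound (Theorem~\ref{thm:bound_sum_of_sqrt}), estimating $\sum_k\sqrt[2k]{b_k}< 2\sqrt{{\rm e}}\,\lambda_2\sqrt{\lambda_1+1}$, while you bypass Tur\'an entirely: the Appell relation $\He_m'=m\He_{m-1}$ plus the logarithmic derivative gives $|\He_{n-2k}(z)/\He_n(z)|\le(|z|-2\sqrt n)^{-2k}$ outside the Szeg\H{o} radius, and the crude bound $c_k\le\bigl(\lambda_2(\lambda_1+1)\bigr)^k/k!$ turns the normalized equation $\He_\lambda(z)=0$ into an exponential-series inequality. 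This is self-contained (you never need the interlacing and symmetry bookkeeping in the proof of Theorem~\ref{thm:bound_sum_of_sqrt}), and it actually yields the slightly sharper constant $1/\sqrt{\ln 2}$ in place of $2\sqrt{{\rm e}}$ in front of $\lambda_2$. Two cosmetic points: your closing parenthetical about $x{\rm e}^x<1$ gives $\bigl(\sqrt{{\rm e}}\,\lambda_2+2\sqrt2\bigr)\sqrt{\lambda_1+1}$, which is stronger than, not identical to, the stated bound; and the step $\sqrt{\lambda_2}\le\lambda_2$ silently assumes $\lambda_2\ge1$, but the case $\lambda_2=0$ is vacuous there since all $c_k$ with $k\ge1$ then vanish.
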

We made no effort to optimize the constant in Corollary~\ref{cor:2parts}, as the correct bound is likely close to $2\sqrt{\lambda_1}$.

One can also obtain bounds for the real and purely imaginary roots from Theorem~\ref{thm:main}. The next corollary gives such bounds for an arbitrary partition $\lambda$.
\begin{Corollary}
\label{cor:real_roots}
Let $\lambda \vdash n$. If $z$ is a real or purely imaginary root of $\He_\lambda(x)$ then $|z| \leq x_n$, where $x_n$ is the largest root of $\He_n(x)$.
\end{Corollary}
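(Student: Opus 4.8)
The plan is to reduce both halves of the statement to the non-negativity of the coefficients produced by Theorem~\ref{thm:main}. Since the Kostka numbers $K_{\lambda'\mu^{(k)}}$ are non-negative integers and $H(\lambda)=n!/\chi^\lambda(1)>0$, that theorem exhibits $\He_\lambda$ as a non-negative combination of step-two Hermite polynomials,
\[
\He_\lambda(x)=\sum_{k=0}^{\lfloor n/2\rfloor}c_k\,\He_{n-2k}(x),\qquad c_k:=\frac{H(\lambda)}{k!\,(n-2k)!}\,K_{\lambda'\mu^{(k)}}\ge 0,
\]
with $c_0=\frac{H(\lambda)}{n!}K_{\lambda'1^{n}}=\frac{H(\lambda)}{n!}\chi^\lambda(1)=1$, in agreement with $\He_\lambda$ being monic of degree $n$. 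I will also use two classical facts about the zeros of Hermite polynomials: the largest zero $x_m$ of $\He_m$ is strictly increasing in $m$ (interlacing of zeros of consecutive orthogonal polynomials), and $\He_m(-x)=(-1)^m\He_m(x)$.

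First I treat the real roots. For a real number $x>x_n$ and any $k$ we have $x>x_n\ge x_{n-2k}$, so $x$ lies beyond every zero of the monic polynomial $\He_{n-2k}$ and therefore $\He_{n-2k}(x)>0$ (the cases $n-2k\in\{0,1\}$ being immediate). Hence $\He_\lambda(x)=\sum_k c_k\He_{n-2k}(x)\ge c_0\He_n(x)=\He_n(x)>0$, so $\He_\lambda$ has no real zero larger than $x_n$. Since every exponent $n-2k$ has the parity of $n$, the displayed expansion gives $\He_\lambda(-x)=(-1)^n\He_\lambda(x)$, so the zero set of $\He_\lambda$ is symmetric about the origin and $\He_\lambda$ has no real zero below $-x_n$ either; thus every real root $z$ satisfies $|z|\le x_n$.

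For the purely imaginary roots the one substantive ingredient is the conjugation symmetry
\[
\He_\lambda(\mathrm{i}x)=\mathrm{i}^{\,n}\,\He_{\lambda'}(x),
\]
which I would derive from Theorem~\ref{thm:bonneux}: substituting $x\mapsto\mathrm{i}x$ multiplies the degree-$(n-2k)$ term of that expansion by $\mathrm{i}^{\,n-2k}=(-1)^k\mathrm{i}^{\,n}$, cancelling the factor $(-1)^k$ there; then, using $H(\lambda')=H(\lambda)$ (conjugate partitions have the same multiset of hook lengths) together with $\chi^{\lambda'}=\varepsilon\otimes\chi^\lambda$, so that $\chi^{\lambda'}(\mu^{(k)})=(-1)^k\chi^\lambda(\mu^{(k)})$ because $\mu^{(k)}$ consists of $k$ transpositions, the resulting series is exactly $\mathrm{i}^{\,n}\He_{\lambda'}(x)$. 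Granting this, if $z=\mathrm{i}y$ with $y$ real is a root of $\He_\lambda$, then $\He_{\lambda'}(y)=0$; since $\lambda'\vdash n$ as well, the real-root case applied to $\lambda'$ gives $|y|\le x_n$, that is $|z|\le x_n$ (the root $z=0$ is covered trivially since $x_n\ge 0$).

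Most of this is routine: the positivity of the $c_k$ is immediate, and the monotonicity and parity of Hermite zeros are standard. The only step requiring a genuine --- though short --- argument is the conjugation identity $\He_\lambda(\mathrm{i}x)=\mathrm{i}^{\,n}\He_{\lambda'}(x)$, which could alternatively be quoted from the literature on Wronskian Hermite polynomials; this is where I expect the only real work to lie.
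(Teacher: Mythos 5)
Your proof is correct and follows essentially the same route as the paper: non-negativity of the coefficients from Theorem~\ref{thm:main} gives positivity of $\He_\lambda$ beyond $x_n$, parity handles the negative real axis, and the conjugation identity $\He_\lambda(\mathrm{i}x)=\mathrm{i}^n\He_{\lambda'}(x)$ reduces the purely imaginary case to the real case for $\lambda'\vdash n$. The only (cosmetic) difference is that you derive the parity and conjugation symmetries directly from Theorem~\ref{thm:bonneux}, whereas the paper quotes them from the literature on Wronskian Hermite polynomials.
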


However, Corollary~\ref{cor:real_roots} does not give the full picture. By exploiting the Schr\"odinger equation it is possible to obtain a better bound for the real roots.
\begin{Proposition}\label{prop:real_roots}
Let $\lambda = (\lambda_1, \lambda_2, \dots, \lambda_r)$. If $z$ is a real root of $\He_\lambda(x)$ then $|z| \leq x_{\lambda_1+r-1}$, where $x_{\lambda_1+r-1}$ is the largest root of $\He_{\lambda_1+r-1}(x)$.
\end{Proposition}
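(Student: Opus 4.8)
My plan is to reduce the statement to a purely analytic lemma about zeros of linear combinations of the rescaled Hermite functions $\psi_n(x):=e^{-x^2/4}\He_n(x)$, which satisfy the Schr\"odinger equation $-\psi_n''+\tfrac{x^2}{4}\psi_n=\bigl(n+\tfrac12\bigr)\psi_n$. Since $\Wr[gf_1,\dots,gf_k]=g^{k}\Wr[f_1,\dots,f_k]$ and $\Delta(n_\lambda)\ne0$, a real $z$ is a root of $\He_\lambda$ iff $\Wr[\psi_{n_1},\dots,\psi_{n_r}](z)=0$. Writing $N:=n_r=\lambda_1+r-1$ (the largest entry of $n_\lambda$), the goal becomes the following.

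\smallskip
\noindent\textbf{Lemma.} For non-negative integers $m_1<\dots<m_k$ and any nonzero $f$ in the span of $\psi_{m_1},\dots,\psi_{m_k}$, the number of zeros of $f$ in $(x_{m_k},\infty)$, counted with multiplicity, is at most $k-1$.
\smallskip

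Because $\psi_n(-x)=(-1)^n\psi_n(x)$, the span is stable under $x\mapsto-x$, so the Lemma applied to $f(-x)$ also bounds the zeros of $f$ in $(-\infty,-x_{m_k})$ by $k-1$. To finish, suppose $z_0$ were a real root of $\He_\lambda$ with $|z_0|>x_N$. Then the columns of $\bigl[\He_{n_i}^{(j)}(z_0)\bigr]_{0\le j\le r-1,\,1\le i\le r}$ are linearly dependent, so some nonzero $P\in\mathrm{span}\{\He_{n_1},\dots,\He_{n_r}\}$ vanishes to order $\ge r$ at $z_0$; hence $f:=e^{-x^2/4}P$ is a nonzero element of $\mathrm{span}\{\psi_{n_1},\dots,\psi_{n_r}\}$ with a zero of order $\ge r$ at $z_0$, contradicting the Lemma (with $k=r$) applied to whichever half-line past $x_N$ contains $z_0$. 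Thus every real root satisfies $|z|\le x_{\lambda_1+r-1}$.

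I would prove the Lemma by induction on $k$; the case $k=1$ is clear, since $\psi_{m_1}$ has no zero beyond the largest zero $x_{m_1}$ of $\He_{m_1}$. For $k\ge2$ write $f=\sum_{i=1}^{k}c_i\psi_{m_i}$. If $c_k=0$, apply the inductive hypothesis inside $\mathrm{span}\{\psi_{m_1},\dots,\psi_{m_{k-1}}\}$, using $(x_{m_k},\infty)\subseteq(x_{m_{k-1}},\infty)$; if $c_i=0$ for all $i<k$, then $f$ is a nonzero multiple of $\psi_{m_k}$ and has no zero in $(x_{m_k},\infty)$. Otherwise set $W:=\Wr[\psi_{m_k},f]=\psi_{m_k}f'-\psi_{m_k}'f$. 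Using the Schr\"odinger equations for the $\psi_{m_i}$ one obtains the clean identity $W'=\psi_{m_k}f_1$, where $f_1:=\sum_{i=1}^{k-1}c_i(m_k-m_i)\psi_{m_i}$ is a \emph{nonzero} element of $\mathrm{span}\{\psi_{m_1},\dots,\psi_{m_{k-1}}\}$ (its coefficients are nonzero exactly where $c_i$ is). Moreover $W=e^{-x^2/2}\Wr[\He_{m_k},e^{x^2/4}f]$ is $e^{-x^2/2}$ times a nonzero polynomial, so $W$ has finitely many zeros and $W(x)\to0$ as $x\to+\infty$. Let $m,\,m_W$ be the numbers of zeros (with multiplicity) of $f,\,W$ in $(x_{m_k},\infty)$. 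Since $\psi_{m_k}$ is zero‑free there, $u:=f/\psi_{m_k}$ has the same $m$ zeros as $f$ and $u'=W/\psi_{m_k}^2$, so Rolle gives $m_W\ge m-1$; if $m_W=0$ we are done, so assume $m_W\ge1$. Counting zeros of $W'$: it vanishes to order $\kappa-1$ at each zero of $W$ of order $\kappa$, at least once between consecutive zeros of $W$ (Rolle), and — since $W$ vanishes at its largest zero in $(x_{m_k},\infty)$ but tends to $0$ at $+\infty$ without being identically zero there — at least once more to the right of that last zero; summing, $W'$ has $\ge m_W$ zeros in $(x_{m_k},\infty)$. These are the zeros of $f_1$ (as $\psi_{m_k}\ne0$), so the inductive hypothesis gives $m_W\le k-2$, whence $m-1\le m_W\le k-2$ and $m\le k-1$.

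The delicate point — and the place I expect to need the most care — is this off‑by‑one: the straightforward Rolle bookkeeping only yields $m\le k$, and the sharp bound $m\le k-1$ is recovered precisely by exploiting the super‑exponential decay of $W=\Wr[\psi_{m_k},f]$ at $+\infty$ (equivalently, that $e^{x^2/4}f$ and $e^{x^2/4}\psi_{m_k}=\He_{m_k}$ are polynomials), which forces $W$ to have a critical point beyond its last zero. Together with the identity $W'=\psi_{m_k}f_1$, this is exactly where the Schr\"odinger/Hermite structure enters; everything else is elementary zero‑counting plus the symmetry $x\mapsto-x$.
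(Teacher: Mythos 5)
Your argument is correct, but it follows a genuinely different route from the paper's. The paper proves the equivalent statement for the weighted functions $\varphi_n(x)={\rm e}^{-x^2/2}H_n(x)$ by induction on the number $r$ of functions in the Wronskian: it invokes Crum's theorem to show that the ratios of consecutive Wronskians $\psi_{n_{r-1}},\psi_{n_r}$ satisfy a Schr\"odinger equation with the transformed potential $x^2-2(\log\psi_{n_{r-2}})''$, factors the full Wronskian via Jacobi's identity as $\psi_{n_{r-2}}\cdot\Wr[\psi_{n_{r-1}},\psi_{n_r}]$, and concludes with the observation that a $2\times2$ Wronskian $w$ of two non-vanishing eigenfunctions with distinct eigenvalues has $w'=(E_1-E_2)\psi_1\psi_2$ of constant sign and $w\to0$ at infinity, hence no zeros past $R$. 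You instead convert a root $z_0$ of the determinant into a nonzero combination $P$ of the $\He_{n_i}$ vanishing to order $\ge r$ at $z_0$, and prove a Descartes/Markov-type oscillation bound --- at most $k-1$ zeros, with multiplicity, for any nonzero combination of $\psi_{m_1},\dots,\psi_{m_k}$ beyond $x_{m_k}$ --- by induction on the number of terms, with your identity $W'=\psi_{m_k}f_1$ playing the role of the paper's two-function observation and the Gaussian decay supplying the extra critical point that sharpens $m\le k$ to $m\le k-1$. Your route is more self-contained (it avoids Crum's theorem entirely, and with it the caveats the paper must address about non-consecutive indices and the non-vanishing of the intermediate Wronskians needed to define the transformed functions) and yields a strictly stronger conclusion, namely a multiplicity-counted zero bound for arbitrary linear combinations of the $\psi_{n_i}$; the paper's route, in exchange, stays within the Darboux-transformation framework of Garc\'{\i}a-Ferrero and G\'omez-Ullate and shows directly that the Wronskian itself is non-vanishing on $(-\infty,-R)\cup(R,+\infty)$. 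Both proofs ultimately rest on the same two ingredients: the Schr\"odinger equation and the decay of the weighted functions at infinity. (Only cosmetic point: for $m_1=0$ the symbol $x_{m_1}$ is undefined since $\He_0=1$ has no roots; your base case and the $c_k=0$ branch should adopt the convention $x_0=-\infty$, which changes nothing.)
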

This result is already apparent in the work of Garc{\'i}a-Ferrero and G{\'o}mez-Ullate \cite[Section~2]{FerreroUllate15}. However, the proof in~\cite{FerreroUllate15} is done under the additional assumption of semi-degeneracy, which is so far unproven for Wronskian Hermite polynomials. Therefore, we include a proof of Proposition~\ref{prop:real_roots}.

These bounds are very useful in the study of irreducibility of $\He_\lambda(x)$. In~\cite{GrosuIrr}, we use Corollary~\ref{cor:real_roots} to show that $\He_{n, 2}(x)$ is irreducible for all $n \geq 2$. Our initial proof relied on the stronger Corollary~\ref{cor:2parts}, but we were eventually able to replace it with an application of Corollary~\ref{cor:real_roots}.

The rest of this note is organized as follows. In Section~\ref{sec:notation}, we fix the notation and state the auxiliary results that we will need. In Section~\ref{sec:main_result}, we prove Theorem~\ref{thm:main}. In Section~\ref{sec:bounds}, we derive the two corollaries. In Section~\ref{sec:prop}, we prove Proposition~\ref{prop:real_roots}. Finally, in Section~\ref{sec:extension}, we generalize the results to a larger class of polynomials.

\section{Notation and auxiliary results}\label{sec:notation}

In this section we define the notation that we use throughout the paper. We also state several results that will be needed for the proofs.

\subsection{Partitions and the symmetric group}

If $n \geq 0$ is an integer, a partition $\lambda$ of $n$, denoted $\lambda \vdash n$, is a sequence of non-negative integers $\lambda_1 \geq \lambda_2 \geq \dots \geq \lambda_r$ such that $\sum_{i=1}^r\lambda_i = n$. We denote $|\lambda|=n$ and call $\ell(\lambda) := r$ the length of the partition $\lambda$. We say that $\lambda_i$ are the \textit{parts} of the partition.

We shall frequently use the notation $(\lambda_1, \lambda_2, \dots, \lambda_r)$ for $\lambda$. Sometimes we will also use the notation $\mu = \big(m^{r_m} \cdots 3^{r_3}2^{r_2}1^{r_1}\big)$, meaning that the partition $\mu$ has~$r_i$ parts of size~$i$. We will drop the parentheses if they are clear from the context.

The \textit{degree vector} of the partition $\lambda$ is defined as $n_\lambda := (\lambda_r, \lambda_{r-1}+1, \dots, \lambda_1+r-1)$. We denote by $n_{\lambda, i}$ the $i$-th entry of this vector.

The \textit{Ferrers diagram} of $\lambda$ is $D_\lambda = \{(i, j)\colon 1 \leq i \leq r,\, 1 \leq j \leq \lambda_i\}$. This can be represented as a collection of unit squares arranged in rows, with the $i$-th row having $\lambda_i$ squares. For example,
\[
\Yvcentermath1
D_{(3, 2, 2, 1)} = \young(~~~,~~,~~,~)
\]

The \textit{conjugate partition} $\lambda'$ is obtained from $\lambda$ by transposing the Ferrers diagram: $\lambda_j'$ is the largest index $i$ such that $\lambda_i \geq j$.

We can define a partial order on the set of partitions by saying that $\lambda \leq \mu$ if $\ell(\lambda) = \ell(\mu)$, and $D_\lambda \subseteq D_\mu$ or equivalently if $\lambda_i \leq \mu_i$ for all $i \leq \ell(\lambda)$. If $\lambda \leq \mu$ then $\mu/\lambda$ denotes the skew shape $D_{\mu / \lambda} := D_\mu \setminus D_\lambda$. We let $|\mu/\lambda|$ denote the number of squares in the skew shape.

A $q$-hook $R$ is any connected set of squares in $D_\lambda$ of size $q$, whose removal produces a valid partition. The \textit{height} $ht(R)$ is defined as one less than the number of rows spanned by $R$. We let~$\mathcal{R}(\lambda, q)$ be the set of partitions $\mu \geq \lambda$ such that $\lambda$ can be obtained from $\mu$ by removing a~$q$-hook. In this case $\mu/\lambda$ is the removed $q$-hook. The condition $\mu \geq \lambda$ means $\mu$ and $\lambda$ have the same length, so $\mathcal{R}(\lambda, q)$ does not include two partitions which differ only in the number of zero parts.

If $\lambda$ and $\mu$ are two partitions, a \textit{semistandard Young tableau} of shape $\lambda$ and type $\mu$ is a filling of the Ferrers diagram of~$\lambda$ with the numbers $1, 2, \dots, \ell(\mu)$ such that the number $i$ appears $\mu_i$ times, the numbers weakly increase along rows, and strictly increase along columns.
The \textit{Kostka number} $K_{\lambda\mu}$ is the number of semistandard Young tableaux of shape $\lambda$ and type $\mu$.

Clearly $K_{\lambda\mu} \geq 0$. Furthermore, $K_{\lambda\mu} > 0$ if and only if $\lambda$ \textit{dominates} $\mu$, written $\lambda \triangleright \mu$, that is when $|\lambda| = |\mu|$ and
\[
\lambda_1 + \dots + \lambda_i \geq \mu_1 + \dots + \mu_i \qquad \textrm{for} \quad i=1, \dots, \ell(\mu).
\]

We denote by $\chi^\lambda$ the irreducible character of $S_n$ associated to $\lambda$. Let $F_\lambda := \chi^{\lambda}(1)$ be the degree of the irreducible representation. Then this is given by the formula (see \cite[equation~(4.11)]{FultonHarris})
\begin{equation}\label{eq:hnfactorial}
F_\lambda = \frac{|\lambda|!}{H(\lambda)}, \qquad \textrm{where} \quad H(\lambda):=\frac{n_{\lambda, 1}! n_{\lambda, 2}! \cdots n_{\lambda, r}!}{\Delta(n_\lambda)}.
\end{equation}

\subsection{Schur polynomials}

Let $x_1, x_2, \dots, x_k$ be $k$ variables. We let $e_i(x_1, \dots, x_k)$ be the elementary symmetric polynomials:
\[
e_i(x_1, \dots, x_k) = \sum_{1 \leq j_1 < \dots < j_i \leq k} x_{j_1}x_{j_2} \cdots x_{j_i}.
\]
By convention $e_i(x_1, \dots, x_k) = 0$ when $i > k$.

Similarly, we let $h_i(x_1, \dots, x_k)$ be the complete symmetric polynomials:
\[
h_i(x_1, \dots, x_k) = \sum_{1 \leq j_1 \leq \dots \leq j_i \leq k} x_{j_1}x_{j_2} \cdots x_{j_i}.
\]

Finally, we let $p_i(x_1, \dots, x_k)$ be the power-sum polynomials:
\[
p_i(x_1, \dots, x_k) = x_1^i + \dots + x_k^i.
\]

We are not going to write the variables if they are clear from the context. For our purposes we will also use the convention $e_0 = h_0 = p_0 = 1$.

If $\lambda = (\lambda_1, \lambda_2, \dots, \lambda_r)$ is a partition, we define
\begin{gather*}
e_\lambda = e_{\lambda_1}e_{\lambda_2} \cdots e_{\lambda_r},\qquad
h_\lambda = h_{\lambda_1}h_{\lambda_2} \cdots h_{\lambda_r},\qquad
p_\lambda = p_{\lambda_1}p_{\lambda_2} \cdots p_{\lambda_r}.
\end{gather*}

Now assume that $k \geq r$. By adding parts of size $0$, we can further assume that $r = k$. In this case we define $W_\lambda(x_1, \dots, x_k) := \det\big[x_i^{\lambda_j+k-j}\big]_{i,j=1}^k$. Then $W_\lambda$ is an alternating polynomial, and hence it is divisible by the Vandermonde determinant $W_0(x_1, \dots, x_k) := \det\big[x_i^{k-j}\big]_{i,j=1}^k$. Set
$s_\lambda(x_1, \dots, x_k) := \frac{W_\lambda}{W_0}$. We call $s_\lambda$ \textit{the Schur polynomial} for $\lambda$. Then $s_\lambda$ is a symmetric polynomial, and is defined for any partition $\lambda$ with $\ell(\lambda) \leq k$. In the case $\ell(\lambda) > k$ we define $s_\lambda(x_1, \dots, x_k) := 0$.

We will need the following consequence of Pieri's rule.
\begin{Theorem}[{\cite[Appendix A.1]{FultonHarris}}]\label{thm:pieri}
For any partition $\mu$ we have
\[
e_\mu = \sum_{\lambda} K_{\lambda'\mu}s_\lambda,
\]
where the sum is taken over all partitions $\lambda$ with non-zero parts.
\end{Theorem}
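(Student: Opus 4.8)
The plan is to derive the identity by iterating the Pieri rule for multiplication by an elementary symmetric polynomial: for any partition $\nu$ and any $q \geq 0$, one has $e_q \cdot s_\nu = \sum_\lambda s_\lambda$, where $\lambda$ runs over the partitions such that $\lambda/\nu$ is a \emph{vertical $q$-strip}, i.e.\ $\lambda \supseteq \nu$, $|\lambda/\nu| = q$, and no two boxes of $\lambda/\nu$ lie in the same row (see \cite[Appendix~A.1]{FultonHarris}). Since the theorem is phrased for symmetric polynomials in finitely many variables, I would first note that this symmetric-function identity specializes verbatim to $k$ variables: the specialization homomorphism sends $s_\lambda$ to $0$ exactly when $\ell(\lambda) > k$ and $e_q$ to $0$ exactly when $q > k$, so the vanishing is consistent on both sides and the surviving terms agree.

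Next I would write $\mu = (\mu_1, \dots, \mu_\ell)$ and expand $e_\mu = e_{\mu_1} e_{\mu_2} \cdots e_{\mu_\ell} \cdot s_{\emptyset}$, applying the Pieri rule once for each factor, say from right to left. This produces $e_\mu = \sum_\lambda c_\lambda s_\lambda$, where $c_\lambda$ is the number of chains $\emptyset = \nu^{(0)} \subseteq \nu^{(1)} \subseteq \cdots \subseteq \nu^{(\ell)} = \lambda$ of partitions for which every skew shape $\nu^{(i)}/\nu^{(i-1)}$ is a vertical $\mu_i$-strip.

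It then remains to show $c_\lambda = K_{\lambda'\mu}$. Transposing Ferrers diagrams is an inclusion-preserving involution that exchanges vertical and horizontal strips, so $\nu^{(i)} \mapsto (\nu^{(i)})'$ sets up a bijection between the chains counted by $c_\lambda$ and the chains $\emptyset = \rho^{(0)} \subseteq \rho^{(1)} \subseteq \cdots \subseteq \rho^{(\ell)} = \lambda'$ in which each $\rho^{(i)}/\rho^{(i-1)}$ is a horizontal $\mu_i$-strip. Finally I would invoke the standard dictionary: recording the entry $i$ in each box of $\rho^{(i)}/\rho^{(i-1)}$ turns such a chain into a semistandard Young tableau of shape $\lambda'$ and type $\mu$ — the horizontal-strip condition is exactly strict increase down columns, the nesting of the $\rho^{(i)}$ forces weak increase along rows, and the letter $i$ is used $\mu_i$ times — and this correspondence is a bijection. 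Hence $c_\lambda = K_{\lambda'\mu}$, which is the claim.

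The only step carrying genuine content is this last identification of chains of horizontal strips with semistandard tableaux; everything else is bookkeeping with Pieri's rule and diagram transposition, so that is where I expect the main (mild) obstacle to lie. An alternative route that avoids the iteration altogether is to apply the fundamental involution $\omega$ of the ring of symmetric functions to the Schur expansion $h_\mu = \sum_\lambda K_{\lambda\mu} s_\lambda$, using $\omega(h_q) = e_q$ and $\omega(s_\lambda) = s_{\lambda'}$ and then reindexing $\lambda \mapsto \lambda'$; I would keep the Pieri-based argument as the main line, to match the cited source, and use the $\omega$ computation only as a consistency check.
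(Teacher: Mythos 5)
Your argument is correct, but note that the paper does not actually prove this statement: it is imported verbatim from \cite{FultonHarris} (Appendix A.1) as a known consequence of Pieri's rule, so there is no internal proof to compare against. Your derivation --- iterating the $e_q$-Pieri rule to count chains of vertical strips, transposing to convert them into chains of horizontal strips ending at $\lambda'$, and reading off a semistandard tableau of shape $\lambda'$ and type $\mu$ --- is the standard proof and essentially the one in the cited source, and your remark about the specialization to $k$ variables (both sides vanish consistently when $\ell(\lambda)>k$ or some $\mu_i>k$) correctly handles the only point where the finite-variable setting of the paper could cause trouble. The alternative via the involution $\omega$ applied to $h_\mu=\sum_\lambda K_{\lambda\mu}s_\lambda$ is also valid and is the quicker route if that expansion is taken as known.
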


We shall also need the following (see \cite[Lecture~4, equation~(4.10)]{FultonHarris} with $C_i$ the conjugacy class of $\mu$).
\begin{Theorem}[Frobenius character formula]\label{thm:frobenius}
Let $\lambda, \mu \vdash n$ be any partitions such that \mbox{$\ell(\lambda) = n$}. Then $\chi^\lambda(\mu)$ is the coefficient of the monomial $x_1^{\lambda_1+n-1}x_2^{\lambda_2+n-2}\cdots x_n^{\lambda_n}$ in the polynomial
\[p_{\mu}(x_1, \dots, x_n) W_0(x_1, \dots, x_n).\]
\end{Theorem}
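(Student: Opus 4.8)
The plan is to prove the formula through the Frobenius characteristic map on symmetric functions, reducing the character value to a coefficient extraction via the bialternant definition of Schur polynomials already recorded above. Equip the ring of symmetric functions with the Hall inner product, under which the power sums satisfy $\langle p_\rho, p_\sigma\rangle = z_\sigma \delta_{\rho\sigma}$ (with $z_\sigma$ the order of the centralizer of a permutation of cycle type $\sigma$) and the Schur functions $\{s_\nu\}_{\nu \vdash n}$ form an orthonormal basis of the degree-$n$ component. The single identity driving everything is the expansion of a power-sum in the Schur basis,
\[
p_\mu = \sum_{\nu \vdash n} \chi^\nu(\mu)\, s_\nu,
\]
which I would establish first.

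To obtain this expansion, I would invoke the Frobenius characteristic map $\mathrm{ch}$, sending a class function $\chi$ on $S_n$ to $\sum_{\rho \vdash n} \frac{\chi(\rho)}{z_\rho} p_\rho$, together with its fundamental property that $\mathrm{ch}(\chi^\nu) = s_\nu$ for every irreducible character. Writing $s_\nu = \sum_\rho \frac{\chi^\nu(\rho)}{z_\rho} p_\rho$ and pairing against $p_\mu$ gives $\langle s_\nu, p_\mu\rangle = \chi^\nu(\mu)$; since the $s_\nu$ are orthonormal, expanding $p_\mu = \sum_\nu \langle p_\mu, s_\nu\rangle s_\nu$ yields the displayed identity.

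With the expansion in hand, I would multiply by $W_0 = \det\big[x_i^{n-j}\big]$ and use $s_\nu W_0 = W_\nu = \det\big[x_i^{\nu_j+n-j}\big] = \sum_{w \in S_n} \mathrm{sgn}(w)\, x^{w(\nu+\delta)}$, where $\delta = (n-1, n-2, \dots, 0)$ and $x^{\alpha}$ denotes $x_1^{\alpha_1}\cdots x_n^{\alpha_n}$. This produces
\[
p_\mu W_0 = \sum_{\nu \vdash n} \chi^\nu(\mu) \sum_{w \in S_n} \mathrm{sgn}(w)\, x^{w(\nu+\delta)}.
\]
The target monomial $x_1^{\lambda_1+n-1}\cdots x_n^{\lambda_n} = x^{\lambda+\delta}$ has strictly decreasing exponents. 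A monomial $x^{w(\nu+\delta)}$ equals $x^{\lambda+\delta}$ only if the entries of $\nu+\delta$ and $\lambda+\delta$ coincide as sets; since both vectors have distinct entries whose decreasing rearrangement determines the underlying partition, this forces $\nu = \lambda$, and then $w$ must fix the already-decreasing vector $\lambda+\delta$, so $w = \mathrm{id}$ and $\mathrm{sgn}(w) = 1$. Hence the coefficient of $x^{\lambda+\delta}$ is exactly $\chi^\lambda(\mu)$, as claimed.

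The main obstacle is the representation-theoretic input $\mathrm{ch}(\chi^\nu) = s_\nu$ (equivalently, the orthonormality of the Schur functions under the characteristic and the resulting identity $\langle s_\nu, p_\mu\rangle = \chi^\nu(\mu)$); everything after that is formal manipulation of the Vandermonde determinant. This input is the genuine content of the symmetric-group character theory, and one would either cite it or develop the characteristic map from the orthogonality relations of characters; the coefficient-extraction step, by contrast, is a short and self-contained argument relying only on the distinctness of the exponents in $\lambda+\delta$.
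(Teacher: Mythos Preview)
Your argument is correct. The paper, however, does not supply its own proof of this statement: it simply records the Frobenius character formula as a known result, with a pointer to Fulton--Harris, Lecture~4, equation~(4.10). So there is no ``paper's proof'' to compare against beyond that citation.

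For what it is worth, your route via the Frobenius characteristic map and the expansion $p_\mu=\sum_{\nu\vdash n}\chi^\nu(\mu)\,s_\nu$ is one of the standard derivations; the coefficient-extraction step you give, using that $\lambda+\delta$ has strictly decreasing (hence distinct) entries so that only $\nu=\lambda$, $w=\mathrm{id}$ contributes, is exactly the argument the paper later reuses in the proof of Lemma~\ref{lem:kostka} to isolate the coefficient of $M_\lambda$ in a sum of $W_\rho$'s. The only substantive input you flag---that $\mathrm{ch}(\chi^\nu)=s_\nu$---is indeed the representation-theoretic content, and it is precisely what the cited reference provides.
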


Related to this we have the Murnaghan--Nakayama rule expressed in terms of Schur polynomials.
\begin{Theorem}[{\cite[Theorem~7.17.1]{StanleyVol2}}] \label{thm:murnaghan}
If $q \geq 1$ and $\lambda$ is a partition then
\[
p_qs_\lambda = \sum_{\mu \in \mathcal{R}(\lambda, q)} (-1)^{ht(\mu/\lambda)}s_\mu.
\]
\end{Theorem}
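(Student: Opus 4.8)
The plan is to prove the identity directly from the bialternant definition of the Schur polynomials used in this section. Fix a number of variables $k$ large enough that every partition obtainable from $\lambda$ by adding a $q$-hook has at most $k$ parts (for instance $k \ge \ell(\lambda)+q$), and pad $\lambda$ with zeros to have exactly $k$ parts. Set $\delta := (k-1, k-2, \dots, 1, 0)$, and for a sequence $\alpha = (\alpha_1, \dots, \alpha_k)$ of non-negative integers write $a_\alpha := \det\big[x_i^{\alpha_j}\big]_{i,j=1}^k$, so that $W_\lambda = a_{\lambda+\delta}$, $W_0 = a_\delta$, and $s_\mu = a_{\mu+\delta}/a_\delta$ for every partition $\mu$ with at most $k$ parts (while $s_\mu = 0$ once $\ell(\mu) > k$). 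Writing $\beta := \lambda + \delta$, it therefore suffices to establish the numerator identity $p_q\, a_\beta = \sum_{\mu \in \mathcal{R}(\lambda, q)} (-1)^{ht(\mu/\lambda)} a_{\mu+\delta}$ and divide through by $a_\delta$.

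The first step is the elementary observation that $p_q\, a_\beta = \sum_{j=1}^k a_{\beta^{(j)}}$, where $\beta^{(j)}$ is obtained from $\beta$ by replacing its $j$-th entry $\beta_j$ by $\beta_j + q$. I would prove this by expanding $a_\beta = \sum_{\sigma \in S_k} \mathrm{sgn}(\sigma)\prod_i x_i^{\beta_{\sigma(i)}}$ and multiplying by $p_q = \sum_l x_l^q$: each monomial $x_l^q \prod_i x_i^{\beta_{\sigma(i)}}$ raises the exponent in slot $l$ by $q$, and summing over $l$ for fixed $\sigma$ reproduces exactly the expansion of $\sum_j a_{\beta^{(j)}}$ under the substitution $j = \sigma(l)$, with the signs $\mathrm{sgn}(\sigma)$ agreeing on both sides.

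The substantive step is to put each $a_{\beta^{(j)}}$ into the standard strictly decreasing form $a_{\mu+\delta}$. If the bumped entry $\beta_j + q$ equals another entry of $\beta$, then $a_{\beta^{(j)}}$ has two equal columns and vanishes. Otherwise a unique sorting permutation returns $\beta^{(j)}$ to strictly decreasing order $\mu^{(j)} + \delta$, giving $a_{\beta^{(j)}} = (-1)^{s_j} a_{\mu^{(j)} + \delta}$, where $s_j$ is the number of entries of $\beta$ lying strictly between $\beta_j$ and $\beta_j + q$. I then read $\beta$ as the set of bead positions $\{\beta_1, \dots, \beta_k\}$ (the beta-numbers of $\lambda$) on an abacus: the operation $\beta_j \mapsto \beta_j + q$ moves a single bead $q$ places to the right, and the classical encoding of partitions by such bead sets identifies the legal moves --- those landing on an empty position --- precisely with the $q$-hooks that can be added to $\lambda$, so that $\mu^{(j)}$ ranges over $\mathcal{R}(\lambda, q)$ as $j$ ranges over the legal moves. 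Crucially, the number $s_j$ of beads jumped over equals the number of rows the added hook spans minus one, which is $ht(\mu^{(j)}/\lambda)$. Matching the alternant sign $(-1)^{s_j}$ with $(-1)^{ht(\mu^{(j)}/\lambda)}$ and verifying that distinct legal moves produce distinct partitions (so that the sum is genuinely indexed by $\mathcal{R}(\lambda, q)$, taken modulo trailing zero parts as in the definition) is the crux of the argument and the place I expect to need the most care.

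Assembling the three steps yields $p_q\, a_\beta = \sum_{\mu \in \mathcal{R}(\lambda, q)} (-1)^{ht(\mu/\lambda)} a_{\mu+\delta}$, the vanishing alternants corresponding to illegal moves dropping out. Dividing by $a_\delta = W_0$ and using $s_\mu = a_{\mu+\delta}/a_\delta$ gives the stated identity. Since this is an identity of symmetric functions that specializes consistently to any number of variables, the choice of $k$ is immaterial once it is large enough, and the result holds for all admissible $k$.
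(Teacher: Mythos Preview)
The paper does not give its own proof of this theorem: it is quoted from Stanley's \emph{Enumerative Combinatorics}, Vol.~2, Theorem~7.17.1, and used as a black box in Section~\ref{sec:extension}. So there is no proof in the paper to compare against.

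Your argument is correct and is essentially the standard bialternant/abacus proof one finds in Stanley or in Macdonald's \emph{Symmetric Functions and Hall Polynomials} (Chapter~I, \S3, Example~11). The identity $p_q\,a_\beta=\sum_j a_{\beta^{(j)}}$ is exactly multilinearity in the columns of the alternant, and the sorting step together with the beta-number/abacus encoding of hook addition is the classical mechanism producing the sign $(-1)^{ht(\mu/\lambda)}$. The only places to be a little more explicit if you write it out in full are: (i) the bijection between legal bead moves and elements of $\mathcal{R}(\lambda,q)$ under the paper's convention that $\mu$ and $\lambda$ are padded to the same length, and (ii) the fact that hooks added in rows beyond index $k$ cannot occur once $k\ge \ell(\lambda)+q$, so no $s_\mu$ with $\ell(\mu)>k$ is missed. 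Both are routine.
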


\subsection{Bounds for the roots of polynomials}

To obtain effective bounds on the modulus of the roots, we will use a result which is essentially due to Tur\'an.
\begin{Theorem}[\cite{Turan54}]\label{thm:bound_sum_of_sqrt}
Suppose $P(x) = \sum_{k=0}^n\alpha_k\He_k(x)$ is a polynomial of degree~$n$. If $z$ is a~zero of~$P(x)$ and~$x_{n, n}$ is the largest root of $\He_n(x)$ then
\[
|z| \leq x_{n, n} + \sum_{k=0}^{n-1} \left| \frac{\alpha_k}{\alpha_n} \right|^{1/(n-k)}.
\]
\end{Theorem}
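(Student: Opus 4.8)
The plan is to run the classical Cauchy-type argument for root bounds, transplanted from the monomial basis to the Hermite basis. First dispose of trivial cases: if $\He_n(z)=0$ then $z$ is one of the (real) zeros of $\He_n$, so $|z|\le x_{n,n}$ and we are done; and if $|z|\le x_{n,n}$ the bound holds because the sum on the right is nonnegative. So we may assume $|z|>x_{n,n}$, which in particular forces $\He_n(z)\ne 0$ since every zero of $\He_n$ lies in $[-x_{n,n},x_{n,n}]$. Writing $P(z)=0$ as $\alpha_n\He_n(z)=-\sum_{k=0}^{n-1}\alpha_k\He_k(z)$ and dividing by $\alpha_n\He_n(z)$ gives
\[
1\ \le\ \sum_{k=0}^{n-1}\left|\frac{\alpha_k}{\alpha_n}\right|\,\frac{|\He_k(z)|}{|\He_n(z)|}.
\]

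The heart of the argument is the estimate
\[
\frac{|\He_k(z)|}{|\He_n(z)|}\ \le\ \frac{1}{\bigl(|z|-x_{n,n}\bigr)^{n-k}}\qquad\textrm{for all } |z|>x_{n,n},\ 0\le k\le n.
\]
By telescoping it suffices to prove the one-step version $|\He_m(z)|\ge\bigl(|z|-x_{n,n}\bigr)|\He_{m-1}(z)|$ for $1\le m\le n$. For real $z>x_{n,n}$ this is essentially immediate: if $\zeta_1<\dots<\zeta_m$ and $\xi_1<\dots<\xi_{m-1}$ are the zeros of $\He_m$ and $\He_{m-1}$, then $\He_m'=m\He_{m-1}$ and Rolle's theorem give the interlacing $\zeta_i<\xi_i<\zeta_{i+1}$, whence $\He_m(z)/\He_{m-1}(z)=(z-\zeta_m)\prod_{i<m}\frac{z-\zeta_i}{z-\xi_i}>z-\zeta_m\ge z-x_{n,n}$. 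For general complex $z$ I would argue as follows. Put $a_i:=|z-\zeta_i|$. Since the $\zeta_i$ are real and increasing, the map $i\mapsto a_i$ is ``valley-shaped'' (non-increasing and then non-decreasing), this being just the behaviour of $\zeta\mapsto(\Re z-\zeta)^2+(\Im z)^2$ along an increasing sequence of $\zeta$'s. As $\xi_i$ is a convex combination of $\zeta_i$ and $\zeta_{i+1}$, we get $|z-\xi_i|\le\max(a_i,a_{i+1})$, and a direct check shows that for a valley-shaped sequence $\prod_{i=1}^{m-1}\max(a_i,a_{i+1})=\bigl(\prod_{i=1}^m a_i\bigr)/\min_i a_i$. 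Finally $\min_i a_i\ge|z|-\max_i|\zeta_i|=|z|-x_{m,m}\ge|z|-x_{n,n}$, using that the zeros of $\He_m$ are symmetric about $0$ and that $x_{m,m}\le x_{n,n}$. Combining, $|\He_{m-1}(z)|=\prod_i|z-\xi_i|\le\prod_i\max(a_i,a_{i+1})\le|\He_m(z)|/(|z|-x_{n,n})$; multiplying these inequalities for $m=k+1,\dots,n$ gives the displayed estimate.

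It then remains to extract the bound. With $\rho:=|z|-x_{n,n}>0$ and $\beta_k:=|\alpha_k/\alpha_n|^{1/(n-k)}$, combining the two displays above yields $1\le\sum_{k=0}^{n-1}(\beta_k/\rho)^{n-k}$. Suppose, for a contradiction, that $\rho>\sum_{k=0}^{n-1}\beta_k=:S$. Then $S>0$ (otherwise the right-hand side vanishes), so $\beta_k/\rho\le S/\rho<1$ for every $k$, and since $n-k\ge1$ we obtain $(\beta_k/\rho)^{n-k}\le\beta_k/\rho$; summing gives $1\le S/\rho<1$, which is absurd. Hence $|z|-x_{n,n}\le\sum_{k=0}^{n-1}|\alpha_k/\alpha_n|^{1/(n-k)}$, the claimed bound.

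I expect the estimate $|\He_k(z)|/|\He_n(z)|\le(|z|-x_{n,n})^{-(n-k)}$ to be the only real obstacle, and specifically its validity for non-real $z$: on the real axis it falls straight out of the interlacing identity above, but off the axis one needs the valley-shape observation (or some equivalent convexity argument) to control $|\He_{m-1}(z)|$ in terms of $|\He_m(z)|$. Everything else --- the case analysis, the geometric-series endgame, and the standard facts that $\He_n$ has simple real zeros symmetric about the origin with $x_{m,m}\le x_{n,n}$ --- is routine.
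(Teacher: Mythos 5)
Your proof is correct, and in outline it follows the same strategy as the paper: use interlacing and symmetry to reduce everything to the largest zero $x_{n,n}$ of $\He_n$, bound the ratios $|\He_k(z)|/|\He_n(z)|$ by a power of $|z|-x_{n,n}$, and finish with the Walsh--Tur\'an extraction step. The difference is one of completeness. The paper only proves the bridging inequality $D(z)\ge |z|-x_{n,n}$, where $D(z)=\min_{k,i}|z-x_{k,i}|$, and defers the core estimate $D(z)\le\sum_{k}|\alpha_k/\alpha_n|^{1/(n-k)}$ to Tur\'an's original argument; you reconstruct that estimate from scratch. Your one-step lemma $|\He_m(z)|\ge(|z|-x_{n,n})\,|\He_{m-1}(z)|$, proved by pairing the interlaced zeros and exploiting the ``valley shape'' of $i\mapsto|z-\zeta_i|$, is a valid substitute for Tur\'an's lemma, and all the supporting steps (interlacing via Rolle applied to $\He_m'=m\He_{m-1}$, the convexity bound $|z-\xi_i|\le\max(a_i,a_{i+1})$, the product identity for valley-shaped sequences, and the contradiction argument at the end) check out. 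A slightly quicker route to the same one-step inequality is the logarithmic-derivative identity $\He_{m-1}(z)/\He_m(z)=\frac1m\sum_i(z-\zeta_i)^{-1}$, each of whose $m$ terms has modulus at most $1/(|z|-x_{n,n})$; this is essentially what Tur\'an does. So your argument buys a fully self-contained proof at the cost of re-proving, by a somewhat more laborious pairing, what the paper is content to cite.
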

However, Tur\'an only proved an inequality for $|\Im z|$ for a decomposition in the base of classical Hermite polynomials.
For convenience, we explain how to adapt the proof to obtain Theorem~\ref{thm:bound_sum_of_sqrt}.
\begin{proof}
Let $x_{k, 1} \leq x_{k, 2} \leq \dots \leq x_{k, k}$ be the roots of $\He_k(x)$.

If $z$ is a complex number, we define $D(z) = \min |z - x_{k, i}|$, where the minimum is taken over all $1 \leq i \leq k \leq n$. The main step of the proof is showing that
\begin{equation}\label{eq:bound_for_root}
D(z) \leq \sum_{k=0}^{n-1} \left| \frac{\alpha_k}{\alpha_n} \right|^{1/(n-k)},
\end{equation}
if $z$ is a root of $P(x)$ and $z$ is not a root of $\He_n(x)$.

The inequality \eqref{eq:bound_for_root} follows verbatim from Tur\'an's proof by replacing $|y|$ with $D(z)$ everywhere (see also \cite{Milovanovic} for an exposition of the proof). Therefore we shall not reproduce the proof here.

Using \eqref{eq:bound_for_root} we will show the inequality in the theorem.

If $z$ is a root of $\He_n(x)$ then the inequality is trivially true, as $x_{n, n} \geq 0$.

So we may assume that $z$ is not a root of $\He_n(x)$. As the roots of Hermite polynomials interlace, we have $x_{n, 1} \leq x_{k, i} \leq x_{n, n}$. The roots of Hermite polynomials are also symmetric around the origin, i.e., $x_{n, 1} = -x_{n, n}$. Therefore the root with largest absolute value among $x_{k, i}$ is $x_{n, n}$. Then
\[
|z - x_{k, i}| \geq |z| - |x_{k, i}| \geq |z| - x_{n, n}.
\]
Therefore $D(z) \geq |z| - x_{n, n}$, and the inequality follows from \eqref{eq:bound_for_root}.
\end{proof}

\section{Proof of the main result}\label{sec:main_result}

In this section we prove our main result Theorem~\ref{thm:main}.

For a partition $\lambda \vdash n$, $\He_\lambda(x)$ is an even, respectively odd, polynomial if the degree is even, respectively odd. This follows either from \cite[Lemma~3.6]{BonneuxStevens} or by examining the expression in Theorem~\ref{thm:bonneux}. Therefore we can write it down as $\He_\lambda(x) = \sum_{j=0}^{\lfloor\frac{n}{2}\rfloor} a_j x^{n-2j}$.

We start the proof of Theorem~\ref{thm:main} by writing the base-change formula.
\begin{Lemma}\label{lem:basechange}
Let $\lambda \vdash n$ and write $\He_\lambda(x) = \sum_{j=0}^{\lfloor\frac{n}{2}\rfloor} a_j x^{n-2j}$. Then $\He_\lambda(x) = \sum_{k=0}^{\lfloor\frac{n}{2}\rfloor} b_k \He_{n-2k}(x)$ where
\[
b_k = \sum_{j=0}^{k}\frac{(n-2j)!}{2^{k-j}(k-j)!(n-2k)!} a_j, \qquad 0 \leq k \leq \left\lfloor\frac{n}{2}\right\rfloor.
\]
\end{Lemma}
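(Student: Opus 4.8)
The plan is to express the monomial $x^m$ in the probabilistic Hermite basis and then substitute. The key classical identity is the inversion formula for Hermite polynomials: for any non-negative integer $m$,
\[
x^m = \sum_{i=0}^{\lfloor m/2 \rfloor} \frac{m!}{2^i i! (m-2i)!} \He_{m-2i}(x).
\]
This follows from the generating function or, more elementarily, by induction using the three-term recurrence $\He_{m+1}(x) = x\He_m(x) - m\He_{m-1}(x)$; I would either cite a standard reference or give the one-line inductive verification. Note the factor $2^{-i}$ here comes precisely from the rescaling $\He_n(x) = 2^{-n/2}H_n(x/\sqrt2)$ that makes $\He_n$ the probabilists' Hermite polynomials, so I would double-check the normalization against the recurrence $y'' - xy' + ny = 0$ stated in the excerpt.

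Next I would substitute this into $\He_\lambda(x) = \sum_{j=0}^{\lfloor n/2\rfloor} a_j x^{n-2j}$, applying the inversion formula with $m = n-2j$:
\[
\He_\lambda(x) = \sum_{j=0}^{\lfloor n/2\rfloor} a_j \sum_{i=0}^{\lfloor (n-2j)/2\rfloor} \frac{(n-2j)!}{2^i i! (n-2j-2i)!} \He_{n-2j-2i}(x).
\]
Then I collect the coefficient of $\He_{n-2k}(x)$ by setting $i = k-j$, which forces $0 \le j \le k$ and (since $n-2j-2i = n-2k \ge 0$) $k \le \lfloor n/2 \rfloor$. This gives
\[
b_k = \sum_{j=0}^{k} a_j \frac{(n-2j)!}{2^{k-j}(k-j)!(n-2k)!},
\]
which is exactly the claimed formula. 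I should also check the edge cases: when $n-2j$ is odd versus even the upper limit $\lfloor(n-2j)/2\rfloor$ behaves correctly, and the constraint $n-2k \ge 0$ is automatically the range of $k$ in the statement.

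There is no real obstacle here — the lemma is a routine change of basis. The only point requiring care is getting the inversion formula with the correct power of $2$ for the probabilists' normalization; a sign or power-of-two slip there would propagate. I would therefore state the inversion identity cleanly as a preliminary fact (with a short inductive proof or a citation), verify it against the recurrence for small $m$, and then the rest is bookkeeping with the double sum and reindexing $i = k-j$. One could alternatively avoid monomials entirely and use the connection coefficients between the two Hermite normalizations directly, but going through $x^m$ is the most transparent route and matches how the $a_j$ are introduced via Theorem~\ref{thm:bonneux}.
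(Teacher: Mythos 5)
Your proposal is correct and matches the paper's proof essentially verbatim: the paper likewise invokes the inversion formula $x^m = \sum_{k} \frac{m!}{2^k k!(m-2k)!}\He_{m-2k}(x)$ (citing Rainville, derived from the generating function $\exp\bigl(tx - \tfrac{t^2}{2}\bigr)$), substitutes it into the monomial expansion of $\He_\lambda(x)$, and reindexes the double sum to read off $b_k$. Your attention to the normalization and the reindexing $i = k-j$ is exactly the bookkeeping the paper carries out.
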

\begin{proof}The Hermite polynomials have the generating function
\[
\sum_{m=0}^{\infty}\He_m(x)\frac{t^m}{m!} = \exp\left(tx - \frac{t^2}{2}\right).
\]
From this we get (see~\cite[Chapter~11, Section~110]{Rainville})
\begin{equation*}
x^m = \sum_{k=0}^{\lfloor \frac{m}{2} \rfloor} \frac{m!}{2^k k! (m-2k)!}\He_{m-2k}(x).
\end{equation*}
Replacing every power of $x$ in the expression of $\He_\lambda(x)$ gives
\begin{align*}
\He_\lambda(x) &= \sum_{j=0}^{\lfloor\frac{n}{2}\rfloor} a_j \left(\sum_{k=0}^{\lfloor \frac{n-2j}{2} \rfloor} \frac{(n-2j)!}{2^k k! (n-2j-2k)!}\He_{n-2j-2k}(x)\right) \\
&= \sum_{k=0}^{\lfloor\frac{n}{2}\rfloor} \left(\sum_{j=0}^k \frac{(n-2j)!}{2^{k-j}(k-j)!(n-2k)!}a_j\right) \He_{n-2k}(x).\tag*{\qed}
\end{align*}\renewcommand{\qed}{}
\end{proof}

We know that $a_j = (-1)^j \frac{H(\lambda)}{2^j j! (n-2j)!} \chi^\lambda(2^j1^{n-2j})$ from Theorem~\ref{thm:bonneux}. Replacing $a_j$ in Lemma~\ref{lem:basechange} gives
\begin{align*}
b_k &= \sum_{j=0}^{k}\frac{(n-2j)!}{2^{k-j}(k-j)!(n-2k)!} (-1)^j\frac{H(\lambda)}{2^j j! (n-2j)!} \chi^\lambda\big(2^j1^{n-2j}\big) \\
&= \sum_{j=0}^{k} (-1)^j\frac{H(\lambda)}{2^k k! (n-2k)!} \binom{k}{j}\chi^\lambda\big(2^j1^{n-2j}\big) \\
&= \frac{H(\lambda)}{2^k k! (n-2k)!} \sum_{j=0}^k (-1)^j \binom{k}{j} \chi^\lambda\big(2^j1^{n-2j}\big).
\end{align*}

Let $S^\lambda_k$ be the sum $\sum_{j=0}^k (-1)^j \binom{k}{j} \chi^\lambda\big(2^j1^{n-2j}\big)$. The somewhat surprising fact is that this sum can be evaluated.
\begin{Lemma}\label{lem:kostka}
Let $\lambda \vdash n$ and $0 \leq k \leq \big\lfloor\frac{n}{2}\big\rfloor$. Set $\mu^{(k)} := \big(2^k1^{n-2k}\big)$. Then $S^\lambda_k = 2^kK_{\lambda'\mu^{(k)}}$.
\end{Lemma}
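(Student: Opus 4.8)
The plan is to convert the character sum into a statement about symmetric polynomials via the Frobenius character formula (Theorem~\ref{thm:frobenius}), and then recognize the alternating binomial sum as producing a power of~$e_2$.

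First I would pad $\lambda$ with zero parts so that $\ell(\lambda)=n$, and work throughout with polynomials in the $n$ variables $x_1,\dots,x_n$. Set $M_\lambda := x_1^{\lambda_1+n-1}x_2^{\lambda_2+n-2}\cdots x_n^{\lambda_n}$. By Theorem~\ref{thm:frobenius}, $\chi^\lambda\big(2^j1^{n-2j}\big)$ is the coefficient of $M_\lambda$ in $p_2^{\,j}p_1^{\,n-2j}W_0$. Since ``coefficient of $M_\lambda$'' is a linear functional on polynomials in $x_1,\dots,x_n$, the quantity $S^\lambda_k$ is the coefficient of $M_\lambda$ in
\[
\left(\sum_{j=0}^k (-1)^j\binom{k}{j}p_2^{\,j}p_1^{\,n-2j}\right)W_0
= p_1^{\,n-2k}\big(p_1^2-p_2\big)^k W_0 ,
\]
where the equality is the binomial theorem after pulling out $p_1^{\,n-2k}$ and writing $p_1^{\,n-2j}=p_1^{\,n-2k}\big(p_1^{2}\big)^{k-j}$.

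Now I would use the elementary identity $p_1^{2}-p_2 = 2e_2$ (expand $(x_1+\dots+x_n)^2$). Since $p_1=e_1$, the right-hand side above equals $2^k e_1^{\,n-2k}e_2^{\,k}W_0 = 2^k e_{\mu^{(k)}}W_0$. Applying Theorem~\ref{thm:pieri} specialized to $n$ variables — which is legitimate here because every partition $\nu$ occurring has $|\nu|=n\ge\ell(\nu)$, so no Schur polynomial degenerates to~$0$ — gives $e_{\mu^{(k)}} = \sum_\nu K_{\nu'\mu^{(k)}}s_\nu$, hence $e_{\mu^{(k)}}W_0 = \sum_\nu K_{\nu'\mu^{(k)}}W_\nu$. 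It then remains to read off the coefficient of $M_\lambda$ from each alternant $W_\nu = \det\big[x_i^{\nu_j+n-j}\big]$: every monomial of $W_\nu$ has exponent vector a permutation of $(\nu_1+n-1,\dots,\nu_n)$, and this vector is strictly decreasing because $\nu$ is a partition, so the only monomial of $W_\nu$ with a strictly decreasing exponent vector is $M_\nu$, occurring with coefficient $1$. Since $M_\lambda$ also has a strictly decreasing exponent vector, the coefficient of $M_\lambda$ in $e_{\mu^{(k)}}W_0$ equals $K_{\lambda'\mu^{(k)}}$, and therefore $S^\lambda_k = 2^k K_{\lambda'\mu^{(k)}}$.

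I do not anticipate a serious obstacle: once the character values are rewritten through Frobenius, this is essentially a two-line symmetric-function computation whose heart is the identity $p_1^2-p_2=2e_2$. The only points requiring care are the bookkeeping with the number of variables (so that the hypothesis $\ell(\lambda)=n$ of the Frobenius formula holds and the Schur polynomials appearing in Pieri's rule do not vanish), and the brief observation identifying the coefficient of $M_\lambda$ in the alternant $W_\nu$ as $\delta_{\lambda\nu}$.
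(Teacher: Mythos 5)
Your proposal is correct and follows essentially the same route as the paper: Frobenius character formula, the binomial theorem to produce $p_1^{n-2k}\big(p_1^2-p_2\big)^k$, the identity $p_1^2-p_2=2e_2$, and Pieri's rule. The only (cosmetic) difference is in the final coefficient extraction, where you observe directly that $M_\nu$ is the unique monomial of $W_\nu$ with strictly decreasing exponent vector, whereas the paper runs a short permutation argument by contradiction; both establish the same fact.
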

\begin{proof}By adding parts of size $0$, we may assume without lack of generality that $\lambda$ has length~$n$, and that we have~$n$ variables $x_1, \dots, x_n$.

Set $\mu^{(j)} := \big(2^j1^{n-2j}\big)$, $0 \leq j \leq k$, and define $M_\lambda$ as the monomial $x_1^{\lambda_1+n-1}x_2^{\lambda_2+n-2}\cdots x_n^{\lambda_n}$.

From Theorem~\ref{thm:frobenius}, we know that $\chi^\lambda\big(\mu^{(j)}\big)$ is the coefficient of $M_\lambda$ in the polynomial \[ p_{\mu^{(j)}}(x_1, \dots, x_n) W_0(x_1, \dots, x_n).\] Therefore $S^\lambda_k$ must be the coefficient of $M_\lambda$ in the polynomial
\begin{align*}
\sum_{j=0}^k(-1)^j\binom{k}{j}p_{\mu^{(j)}}W_0 &= \sum_{j=0}^k(-1)^j\binom{k}{j} p_2^j p_1^{n-2j}W_0
 = W_0 p_1^{n-2k} \sum_{j=0}^k (-1)^j\binom{k}{j}p_2^jp_1^{2(k-j)} \\
&= W_0 p_1^{n-2k} \big(p_1^2 - p_2\big)^k,
\end{align*}
where the last equality follows from the binomial theorem.

However, $p_1^2 - p_2 = 2e_2$. Furthermore, $p_1 = e_1$. Hence the above is the polynomial
\begin{align*}
 2^k e_1^{n-2k} e_2^k W_0 &= 2^k e_{\mu^{(k)}}W_0 = \sum_\rho 2^k K_{\rho'\mu^{(k)}}s_\rho W_0 \quad (\textrm{ by Theorem~\ref{thm:pieri}})\\
&= \sum_{\substack{\rho\\\ell(\rho)=n}} 2^k K_{\rho'\mu^{(k)}} W_\rho,
\end{align*}
as $s_\rho = \frac{W_\rho}{W_0}$ when $\ell(\rho) \leq n$, and $s_\rho = 0$ otherwise. Furthermore in the last sum the appropriate number of zero parts were added to $\rho$ such that $W_\rho$ is defined.

Recall that we need the coefficient of $M_\lambda$. We argue that this can only come from $\rho = \lambda$.

Let $\rho$ be a partition such that the determinant $W_\rho = \det\big[x_i^{\rho_j+n-j}\big]$ contains the mono\-mial~$M_\lambda$. Assume for a contradiction that $\rho \neq \lambda$.
By comparing powers of each $x_i$, there must exist a permutation $\sigma$ such that $\rho_{\sigma(i)}+n-\sigma(i) = \lambda_i + n - i$. Hence $\rho_{\sigma(i)} = \lambda_{i} + \sigma(i) - i$ for $1 \leq i \leq n$.

Let $i$ be minimal such that $\sigma(i) \neq i$. Then $\sigma(i) > i$. Let $j > i$ such that $\sigma(j) = i$. As $\rho_i \geq \rho_{\sigma(i)}$ we have
\[
\rho_i = \lambda_j + i - j \geq \rho_{\sigma(i)} = \lambda_{i} + \sigma(i) - i.
\]
So $\lambda_j \geq \lambda_{i} + \sigma(i) - i + (j - i) > \lambda_{i} + \sigma(i) - i > \lambda_i$, a contradiction to the fact that $\lambda_i \geq \lambda_j$ for $j \geq i$.

Therefore the coefficient of $M_\lambda$ comes from $\rho = \lambda$ only, and so it is $2^k K_{\lambda'\mu^{(k)}}$. This finishes the proof.
\end{proof}

Theorem~\ref{thm:main} now follows directly by replacing $S_k^\lambda$ in the formula for $b_k$:
\[
b_k = \frac{H(\lambda)}{2^k k! (n-2k)!}S_k^\lambda = \frac{H(\lambda)}{k!(n-2k)!}K_{\lambda'\mu^{(k)}}.
\]

\section{An upper bound for the modulus of the roots}\label{sec:bounds}

In this section we derive the bounds on the absolute value of the roots. We again write $\He_\lambda(x) = \sum_{k=0}^{\lfloor\frac{n}{2}\rfloor} b_k \He_{n-2k}(x)$.

\begin{proof}[Proof of Corollary~\ref{cor:2parts}]
The plan is to the compute the coefficients $b_k$ exactly and then use Theorem~\ref{thm:bound_sum_of_sqrt}.

Let $\lambda = (\lambda_1, \lambda_2) \vdash n$. The conjugate of the partition $\lambda$ is $\lambda' = \big(2^{\lambda_2}1^{\lambda_1-\lambda_2}\big)$. We will determine~$K_{\lambda'\mu^{(k)}}$, where recall that $\mu^{(k)} = \big(2^k1^{n-2k}\big)$.

If $k > \lambda_2$ then $K_{\lambda'\mu^{(k)}} = 0$, as $\lambda'$ does not dominate $\mu^{(k)}$. So $b_k = 0$.

If $k \leq \lambda_2$ then $K_{\lambda'\mu^{(k)}} = F_{\rho}$, where $\rho = \big(2^{\lambda_2-k}1^{\lambda_1-\lambda_2}\big)$. Using the fact that $F_\rho = F_{\rho'}$ we get $K_{\lambda'\mu^{(k)}} = F_{(\lambda_1-k,\lambda_2-k)}$. So
\begin{align*}
b_k &= \frac{H(\lambda)}{k!(n-2k)!}F_{(\lambda_1-k,\lambda_2-k)} \\
&
 \eqByRef{eq:hnfactorial} \frac{(\lambda_1+1)!\lambda_2!}{(\lambda_1-\lambda_2+1)k!(n-2k)!}\frac{(n-2k)!}{(\lambda_1-k+1)!(\lambda_2-k)!}(\lambda_1-\lambda_2+1) \\
&= \frac{(\lambda_1+1)!\lambda_2!}{(\lambda_1-k+1)!(\lambda_2-k)!k!}.
\end{align*}

From Stirling's approximation it follows that $k! \geq \sqrt{2\pi}k^{k+\frac{1}{2}}{\rm e}^{-k}$ and so we get
\[
b_k \leq \frac{(\lambda_1+1)^k\lambda_2^k}{\sqrt{2\pi}k^{k+\frac{1}{2}}{\rm e}^{-k}}.
\]

Let $z$ be a root of $\He_\lambda(x)$. From Theorem~\ref{thm:bound_sum_of_sqrt} and the fact that $b_0 = 1$, we obtain $|z| \leq x_{n, n} + \sum_{k=1}^{\lfloor\frac{n}{2}\rfloor} \sqrt[2k]{b_k}$, where $x_{n, n}$ is the largest root of $\He_n(x)$. But for $k > \lambda_2$ we have $b_k = 0$, as shown above. Hence
\[
\sum_{k=1}^{\lfloor\frac{n}{2}\rfloor} \sqrt[2k]{b_k} = \sum_{k=1}^{\lambda_2} \sqrt[2k]{b_k} \leq \sqrt{(\lambda_1+1)\lambda_2{\rm e}}\sum_{k=1}^{\lambda_2}\frac{1}{\sqrt{k}\sqrt[4k]{2\pi k}} < \sqrt{(\lambda_1+1)\lambda_2{\rm e}}\sum_{k=1}^{\lambda_2}\frac{1}{\sqrt{k}}.
\]
Using the inequality $\sum_{k=1}^{\lambda_2}\frac{1}{\sqrt{k}} < 2\sqrt{\lambda_2}$ we get
\[
\sum_{k=1}^{\lambda_2} \sqrt[2k]{b_k} < \sqrt{(\lambda_1+1)\lambda_2{\rm e}} \big(2\sqrt{\lambda_2}\big) = 2\sqrt{{\rm e}}\lambda_2\sqrt{\lambda_1+1}.
\]

Furthermore, Theorem~\ref{thm:szego} shows that $x_{n, n} \leq \frac{2(n-1)}{\sqrt{n+2}} \leq 2\sqrt{n-1}$ \big(the constant $2$ comes from the rescaling $\He_n(x) = 2^{-\frac{n}{2}}H_n\big(\frac{x}{\sqrt{2}}\big)$\big). Hence
\[
|z| \leq 2\sqrt{n-1} + 2\sqrt{{\rm e}}\lambda_2\sqrt{\lambda_1+1}.
\]
Now $n-1 = \lambda_1 + \lambda_2 - 1 < 2(\lambda_1+1)$, so
\[
|z| < 2\sqrt{2(\lambda_1+1)} + 2\sqrt{{\rm e}}\lambda_2\sqrt{\lambda_1+1} = 2\big(\sqrt{{\rm e}}\lambda_2+\sqrt{2}\big)\sqrt{\lambda_1+1}.
\]
This finishes the proof.
\end{proof}

Tur\'an's theorem is modelled after Walsh's theorem \cite{Walsh} (see also \cite{Milovanovic}), which gives a similar bound, but in terms of the expansion in the usual basis $\{x^n\}_{n \geq 0}$. However, applying Walsh's theorem to the coefficients in Theorem~\ref{thm:bonneux} only gives a bound of the order $O\big(n\sqrt{n}\big)$. The bound does not change for partitions of length $2$: for example, for $\He_{n, 2}(x)$ it is possible to compute the character values exactly and show that the upper bound in Walsh's theorem is at least $\Omega\big(n\sqrt{n}\big)$. Hence changing to the Hermite basis and relying on Theorem~\ref{thm:main} is necessary.

Let us now look at the real and purely imaginary roots of $\He_\lambda(x)$.

\begin{proof}[Proof of Corollary~\ref{cor:real_roots}]
Let $x_{k, 1} \leq x_{k, 2} \leq \dots \leq x_{k, k}$ be the roots of $\He_k(x)$.

We show that $\He_\lambda(x)$ has no real root in the interval $(x_{n, n}, +\infty)$. The roots of Hermite polynomials interlace, so $x_{n, 1} \leq x_{k, i} \leq x_{n, n}$ for all $1 \leq i \leq k \leq n$. Therefore if $z \in \mathbb{R}$ is greater than $x_{n, n}$, then $\He_k(z) > 0$ for all $k \leq n$. Furthermore, $b_k \geq 0$ for all $k$ and $b_0 = 1$. Hence $\He_\lambda(z) > 0$, so $z$ is not a root.

On the other hand, $\He_\lambda(-z) = (-1)^{|\lambda|}\He_\lambda(z)$ (see \cite[Lemma~3.6]{BonneuxStevens}). Therefore $\He_\lambda(x)$ has no roots in the interval $(-\infty, -x_{n, n})$. This shows that any real root $z$ of $\He_\lambda(x)$ satisfies $|z| \leq x_{n, n}$.

Now let $z \in {\rm i}\mathbb{R}$ be a purely imaginary root of $\He_\lambda(x)$. Then $\He_\lambda(z) = {\rm i}^{|\lambda|}\He_{\lambda'}(-{\rm i}z)$ (see \cite[Proposition~3.8]{BonneuxStevens}). Hence ${\rm i}z$ is a real root of $\He_{\lambda'}(x)$. But $|\lambda'| = n$, so $|z| = |{\rm i}z| \leq x_{n, n}$ by the above argument. This finishes the proof.
\end{proof}

\section{Proof of Proposition~\ref{prop:real_roots}}\label{sec:prop}

In this section we will work with the unnormalized Hermite functions. For any $n \geq 0$, define
\[
\varphi_n(x) := {\rm e}^{-\frac{x^2}{2}}H_n(x).
\]
The functions $\varphi_n(x)$ have a nicer analytic behavior than $H_n(x)$ or $\He_n(x)$ because they approach~$0$ at infinity. Furthermore, many previous results are stated in terms of the unnormalized Hermite functions. For instance this allows us to apply results from~\cite{FerreroUllate15}.

The functions $\varphi_n(x)$ satisfy the Schr\"odinger equation
\[
-\varphi_n''(x) + x^2\varphi_n(x) = (2n+1)\varphi_n(x).
\]
Furthermore, for any non-negative integers $n_1, n_2, \dots, n_r$ we have (see \cite[Proposition~3.1]{FerreroUllate15}):
\begin{equation}\label{eq:weighted_wronskian}
\Wr[\varphi_{n_1}(x), \varphi_{n_2}(x), \dots, \varphi_{n_r}(x)] = {\rm e}^{-\frac{rx^2}{2}}\Wr[H_{n_1}(x), H_{n_2}(x), \dots, H_{n_r}(x)].
\end{equation}

\begin{Proposition}\label{prop:modified}
Let $n_1, n_2, \dots, n_r$ be non-negative integers. Suppose $R \geq 0$ is such that any root~$x_0$ of $\varphi_{n_i}(x)$ verifies $|x_0| \leq R$, for all $1 \leq i \leq r$. If $z$ is a real root of $\Wr[\varphi_{n_1}(x), \varphi_{n_2}(x), \dots, \allowbreak \varphi_{n_r}(x)]$ then $|z| \leq R$.
\end{Proposition}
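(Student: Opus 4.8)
The plan is to show that $W(x):=\Wr[\varphi_{n_1}(x),\dots,\varphi_{n_r}(x)]$ has no real zero outside $[-R,R]$, exploiting the Schr\"odinger equation together with the Darboux--Crum transformation, which removes one function at a time. First I would make two reductions: permuting the $n_i$ changes $W$ only by a sign (and equal $n_i$ make $W\equiv 0$), so I may assume $n_1<\dots<n_r$; and since $\varphi_n(-x)=(-1)^n\varphi_n(x)$ forces the zero set of $W$ to be symmetric about the origin, it suffices to prove that $W$ does not vanish on $J:=(R,\infty)$. Note that, by hypothesis, every $\varphi_{n_i}$, hence every Hermite polynomial $H_{n_i}$, is nonvanishing on $J$.

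The core is a two-function lemma: if $u,v\in C^2(J)$ solve $-u''+Vu=\alpha u$ and $-v''+Vv=\beta v$ on $J$ for some continuous $V$ and reals $\alpha\ne\beta$, if $u$ and $v$ are nonvanishing on $J$, and if $\Wr[u,v](x)\to 0$ as $x\to+\infty$, then $\Wr[u,v]$ is nonvanishing on $J$. Indeed $\Wr[u,v]'=uv''-u''v=(\alpha-\beta)uv$ has constant nonzero sign on $J$, so $\Wr[u,v]$ is strictly monotone there, and a strictly monotone function on $(R,\infty)$ with limit $0$ at $+\infty$ cannot vanish.

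To bootstrap this to all $r$, I would induct on the number of functions $m$ over a statement robust under one Darboux step: \emph{whenever $f_1,\dots,f_m\in C^2(J)$ solve $-f_i''+Vf_i=\lambda_i f_i$ on $J$ for a continuous $V$ and reals $\lambda_1<\dots<\lambda_m$, and each $f_i={\rm e}^{-x^2/2}\rho_i$ with $\rho_i$ a rational function that is smooth and nonvanishing on $J$, then $\Wr[f_1,\dots,f_m]$ is nonvanishing on $J$.} The case $m=1$ is clear. For $m\ge 2$, the classical Wronskian identity gives $\Wr[f_1,\dots,f_m]=f_1\cdot\Wr[\hat g_2,\dots,\hat g_m]$ with $\hat g_i:=\Wr[f_1,f_i]/f_1$, and the Darboux (Crum) intertwining says the $\hat g_i$ solve $-\hat g_i''+\hat V\hat g_i=\lambda_i\hat g_i$ with $\hat V:=V-2(\ln|f_1|)''$, which is continuous on $J$ because $f_1$ is $C^2$ and nonvanishing there. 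It then remains to check that $(\hat g_2,\dots,\hat g_m)$ meets the hypotheses for $m-1$: the $\lambda_i$ with $i\ge 2$ are still distinct and ordered, and since $\Wr[{\rm e}^{-x^2/2}\rho_1,{\rm e}^{-x^2/2}\rho_i]={\rm e}^{-x^2}\Wr[\rho_1,\rho_i]$ we get $\hat g_i={\rm e}^{-x^2/2}\bigl(\Wr[\rho_1,\rho_i]/\rho_1\bigr)$, where $\Wr[\rho_1,\rho_i]/\rho_1$ is rational, smooth on $J$ (as $\rho_1\ne 0$ there), and nonvanishing on $J$ --- this last point being exactly the two-function lemma applied to $f_1,f_i$, whose Wronskian ${\rm e}^{-x^2}\Wr[\rho_1,\rho_i]$ tends to $0$ since a rational function grows at most polynomially. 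As $f_1$ is nonvanishing on $J$, the inductive hypothesis then yields $\Wr[f_1,\dots,f_m]\ne 0$ on $J$. Applying the statement with $f_i=\varphi_{n_i}$, $V=x^2$, $\lambda_i=2n_i+1$, and $\rho_i=H_{n_i}$ shows $W\ne 0$ on $J$, hence (by the symmetry above) on $(-\infty,-R)$ as well, i.e.\ $|z|\le R$ for every real root $z$ of $W$.

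The step I expect to be the real obstacle is choosing the inductive statement correctly and checking that it is closed under the Darboux transformation: the transformed functions $\hat g_i$ no longer satisfy $-y''+x^2y=\lambda y$, so one must carry the hypotheses for an arbitrary continuous potential, and one must retain enough control at $+\infty$ --- here the ansatz ``${\rm e}^{-x^2/2}$ times a rational function smooth on $J$'' --- to guarantee that the two-function Wronskians still decay, so that the two-function lemma applies at every level. The remaining ingredients (the reduction identity $\Wr[f_1,\dots,f_m]=f_1\Wr[\hat g_2,\dots,\hat g_m]$, the Darboux formula for $\hat V$, and the scaling rule $\Wr[\alpha u_1,\dots,\alpha u_k]=\alpha^k\Wr[u_1,\dots,u_k]$) are routine.
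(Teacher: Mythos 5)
Your proof is correct, and it rests on the same engine as the paper's: your two-function lemma is exactly the paper's Observation~\ref{obs:schrodinger} (the identity $\Wr[u,v]'=(\alpha-\beta)uv$ forces strict monotonicity, which together with decay at infinity excludes zeros). Where you genuinely diverge is in how the induction is organized. The paper inducts on the proposition itself and peels off the top \emph{two} indices at once: it sets $\psi_{n_{r-2}}=\Wr[\varphi_{n_1},\dots,\varphi_{n_{r-2}}]$ and lets $\psi_{n_{r-1}},\psi_{n_r}$ be the quotients of the two $(r-1)$-fold Wronskians by $\psi_{n_{r-2}}$, applies the induction hypothesis three times to see that all three are nonvanishing on $I_R$, invokes the $(r-2)$-fold Crum transformation to place $\psi_{n_{r-1}},\psi_{n_r}$ in a common Schr\"odinger equation, and closes with the Desnanot--Jacobi identity $\Wr[\varphi_{n_1},\dots,\varphi_{n_r}]=\psi_{n_{r-2}}\Wr[\psi_{n_{r-1}},\psi_{n_r}]$. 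You instead peel off \emph{one} function at a time, which forces you to strengthen the inductive statement to arbitrary continuous potentials and to functions of the form ${\rm e}^{-x^2/2}$ times a rational function nonvanishing on $J$ --- precisely the invariant needed so that the two-function Wronskians still decay at every level. Your route buys elementarity (only the one-step Darboux intertwining and the one-step reduction $\Wr[f_1,\dots,f_m]=f_1\Wr[\hat g_2,\dots,\hat g_m]$ are needed) at the price of a heavier inductive hypothesis; the paper keeps the induction statement minimal but leans on the full multi-step Crum theorem, whose applicability for non-consecutive indices it must justify separately. You correctly identified the delicate point --- closure of the invariant under one Darboux step (continuity of $\hat V$, the form of $\hat g_i$, and its nonvanishing via the two-function lemma) --- and your verification of it is sound.
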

Proposition~\ref{prop:modified} is equivalent to Proposition~\ref{prop:real_roots}. This follows from Definition~\ref{def:wronskian}, the fact that $\He_n(x)$ is a rescaling of $H_n(x)$, and the fact that the roots of Hermite polynomials interlace.

\begin{proof}[Proof of Proposition~\ref{prop:modified}]
Define $I_R := (-\infty, -R) \cup (R, +\infty)$. We may assume without lack of generality that the numbers $n_1, \dots, n_r$ are distinct, otherwise the determinant vanishes and the claim is trivially true. We will show that for $x \in I_R$, $\Wr[\varphi_{n_1}(x), \varphi_{n_2}(x), \dots, \varphi_{n_r}(x)] \neq 0$. We can order the numbers in increasing order, i.e., $n_1 < n_2 < \dots < n_r$.

The proof relies on the following simple observation.
\begin{Observation}\label{obs:schrodinger}
Suppose $\psi_1(x)$ and $\psi_2(x)$ verify the Schr\"odinger equation
\begin{equation}
\label{eq:schrodinger}
-\psi_i''(x) + V(x)\psi_i(x) = E_i\psi(x)
\end{equation}
on $I_R$, and do not vanish in $I_R$. Let $w(x):= \Wr[\psi_1(x), \psi_2(x)]$. If $E_1 \neq E_2$ and $\lim\limits_{x \rightarrow \pm \infty} w(x) = 0$ then $w(x)$ has no zeros in $I_R$.
\end{Observation}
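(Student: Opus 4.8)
The plan is to exploit the classical identity that the Wronskian of two solutions of a Schr\"odinger equation with different energies has a sign-definite derivative: I would show that $w$ is \emph{strictly monotone} on each of the two connected components of $I_R$, and then use the decay of $w$ at infinity to conclude that it cannot vanish.

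First I would differentiate $w(x) = \psi_1(x)\psi_2'(x) - \psi_1'(x)\psi_2(x)$. The two occurrences of $\psi_1'\psi_2'$ cancel, and substituting $\psi_i'' = (V - E_i)\psi_i$ from \eqref{eq:schrodinger} yields
\[
w'(x) = \psi_1(x)\psi_2''(x) - \psi_1''(x)\psi_2(x) = (E_1 - E_2)\,\psi_1(x)\psi_2(x).
\]
Next I would fix one of the two rays forming $I_R$, say $J = (R, +\infty)$; the ray $(-\infty, -R)$ is treated identically. Since $\psi_1$ and $\psi_2$ are continuous and nowhere zero on $J$, each keeps a constant sign on $J$, so $\psi_1\psi_2$ has constant sign on $J$; together with $E_1 \neq E_2$ this shows $w'$ is nowhere zero on $J$ and keeps a constant sign there, i.e., $w$ is strictly monotone on $J$.

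Finally I would invoke the hypothesis $\lim_{x \to \pm\infty} w(x) = 0$. If $w$ is strictly increasing on $J$, then $w(x) < 0$ for every $x \in J$; if it is strictly decreasing, then $w(x) > 0$ for every $x \in J$. In either case $w$ has no zero on $J$, and applying the same reasoning to $(-\infty, -R)$ completes the argument.

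I do not expect a genuine obstacle here. The only points that require a little care are that $I_R$ is disconnected, so the sign of $\psi_1\psi_2$ — and hence the direction of monotonicity of $w$ — must be analysed separately on each ray, and that one really does need the monotonicity to be \emph{strict} (which is guaranteed because $w'$ never vanishes on $I_R$) in order to rule out a zero of $w$ that would still be compatible with $w \to 0$ at infinity.
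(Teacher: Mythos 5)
Your argument is correct and coincides with the paper's own proof: both compute $w'=(E_1-E_2)\psi_1\psi_2$, deduce strict monotonicity of $w$ on each ray of $I_R$ from the non-vanishing of $\psi_1\psi_2$, and combine this with the decay of $w$ at infinity to exclude zeros. No gaps.
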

\begin{proof}
Note that
\begin{gather*}
w(x) = \psi_1(x)\psi_2'(x) - \psi_1'(x)\psi_2(x),\\
w'(x) = \psi_1(x)\psi_2''(x) - \psi_1''(x)\psi_2(x) \eqByRef{eq:schrodinger} (E_1 - E_2)\psi_1(x)\psi_2(x).
\end{gather*}

As $E_1 \neq E_2$ and $\psi_1(x)\psi_2(x) \neq 0$ on $I_R$, $w'(x)$ has constant sign on each interval $(R, +\infty)$ and $(-\infty, -R)$. As $\lim\limits_{x \rightarrow \pm \infty} w(x) = 0$, it follows that on each interval $(R, +\infty)$ and $(-R, -\infty)$, either $w(x)$ strictly decreases from a positive value to~$0$, or strictly increases from a negative value to~$0$. Hence $w(x)$ is never $0$ in these intervals.
\end{proof}

We now prove the statement by induction on $r \geq 1$.

If $r = 1$, the Wronskian is just $\varphi_{n_1}(x)$, so the claim is trivially true.

If $r=2$, notice that
\[
\Wr[\varphi_{n_1}(x), \varphi_{n_2}(x)] \eqByRef{eq:weighted_wronskian} {\rm e}^{-x^2}\Wr[H_{n_1}(x), H_{n_2}(x)].
\]
Therefore $\lim\limits_{x \rightarrow \pm \infty} \Wr[\varphi_{n_1}(x), \varphi_{n_2}(x)] = 0$. By the choice of $R$, $\varphi_{n_1}(x)$ and $\varphi_{n_2}(x)$ have no roots in $I_R$. Then the statement follows from Observation~\ref{obs:schrodinger}.

Now assume $r \geq 3$ and the induction hypothesis holds. Define
\begin{gather*}
\psi_{n_{r-2}}(x) = \Wr[\varphi_{n_1}(x), \varphi_{n_2}(x), \dots, \varphi_{n_{r-2}}(x)],\\
\psi_{n_{r-1}}(x) = \frac{\Wr[\varphi_{n_1}(x), \varphi_{n_2}(x), \dots, \varphi_{n_{r-2}}(x), \varphi_{n_{r-1}}(x)]}{\psi_{n_{r-2}}(x)},\\
\psi_{n_r}(x) = \frac{\Wr[\varphi_{n_1}(x), \varphi_{n_2}(x), \dots, \varphi_{n_{r-2}}(x), \varphi_{n_r}(x)]}{\psi_{n_{r-2}}(x)}.
\end{gather*}

From the induction hypothesis it follows that $\psi_{n_{r-2}}(x), \psi_{n_{r-1}}(x)$ and $\psi_{n_r}(x)$ do not vanish in~$I_R$. Then from the definition they are repeatedly differentiable in $I_R$. In this situation, Crum~\cite{Crum} showed that for $i \in \{n_{r-1}, n_r\}$, $\psi_i$ verifies the Schr\"odinger equation
\begin{equation}\label{eq:crum}
-\psi_i''(x) + V(x)\psi_i(x) = (2i+1)\psi_i(x),
\end{equation}
where
\[
V(x) = x^2 - 2\frac{\partial^2}{\partial x^2}\log\psi_{n_{r-2}}(x).
\]
Crum proved this in the case when $n_1, n_2, \dots, n_{r-2}$ are consecutive integers starting from $0$, and only for the interval $(0, 1)$ with boundary conditions. However, the proof of~\eqref{eq:crum} remains valid for a sequence $n_1 < n_2 < \dots < n_{r-2}$ of non-consecutive integers, in a neighborhood of $x$ where the Wronskians do not vanish.

Let $w(x) := \Wr[\psi_{n_{r-1}}(x), \psi_{n_r}(x)]$. Jacobi’s identity for Wronskians tells us that
\begin{equation}\label{eq:jacobi}
\Wr[\varphi_{n_1}(x), \varphi_{n_2}(x), \dots, \varphi_{n_r}(x)] = \psi_{n_{r-2}}(x)w(x).
\end{equation}
Hence
\begin{align*}
w(x) &= \frac{\Wr[\varphi_{n_1}(x), \varphi_{n_2}(x), \dots, \varphi_{n_r}(x)]}{\Wr[\varphi_{n_1}(x), \varphi_{n_2}(x), \dots, \varphi_{n_{r-2}}(x)]} \eqByRef{eq:weighted_wronskian} \frac{{\rm e}^{-\frac{rx^2}{2}}\Wr[H_{n_1}(x), H_{n_2}(x), \dots, H_{n_r}(x)]}{{\rm e}^{-\frac{(r-2)x^2}{2}}\Wr[H_{n_1}(x), H_{n_2}(x), \dots, H_{n_{r-2}}(x)]}\\
&= {\rm e}^{-x^2}\frac{\Wr[H_{n_1}(x), H_{n_2}(x), \dots, H_{n_{r}}(x)]}{\Wr[H_{n_1}(x), H_{n_2}(x), \dots, H_{n_{r-2}}(x)]}.
\end{align*}
Then $\lim\limits_{x \rightarrow \pm \infty} w(x) = 0$. From this and~\eqref{eq:crum}, we may apply Observation~\ref{obs:schrodinger} to deduce that $w(x)$ has no zeros in $I_R$. As $\psi_{n_{r-2}}(x) \neq 0$ on $I_R$, the right-hand side of \eqref{eq:jacobi} does not vanish in~$I_R$. Hence the left-hand side Wronskian in~\eqref{eq:jacobi} does not vanish in~$I_R$ either.
\end{proof}

\section{Extension of the results to other polynomials}
\label{sec:extension}

In this section we consider the possibility of extending our results to polynomials which are obtained from the following generating function:
\[
\exp\left(xt - \frac{t^q}{q}\right)= \sum_{n=0}^\infty Q_n(x)\frac{t^n}{n!},
\]
where $q$ is a positive integer. For $q = 2$ we recover the Hermite polynomials $\He_n(x)$. Note that we omit the dependence of $Q_n(x)$ on $q$ as this will always be clear from the context.

The polynomials $Q_n(x)$ have similar properties as the Hermite polynomials. They are $(q-1)$-orthogonal and Appell polynomials \cite{BonneuxCoefficients}, and satisfy the recurrence relation:
\[
Q_n(x) = xQ_{n-1}(x) - \frac{(n-1)!}{(n-q)!}Q_{n-q}(x),
\]
with $Q_n(x) = x^n$, $0 \leq n \leq q-1$.

$Q_n(x)$ are $d$-symmetric polynomials with $d := q - 1$:
\[
Q_n(\omega_{q}z) =\omega_{q}^nQ_n(z),
\]
where $\omega_{q} := \exp\big(\frac{2\pi {\rm i}}{q}\big)$. In particular, the zeros of $Q_n(x)$ lie on the \textit{$q$-star}
\[
\bigcup_{\ell=0}^q [0, \infty) \times \omega_q^\ell.
\]
$Q_n(x)$ has exactly $\big\lfloor \frac{n}{q} \big\rfloor$ positive simple real zeros \cite[Theorem~2.2]{BenRomdhane}, and the largest zero in absolute value from $Q_1(x), \dots, Q_n(x)$ belongs to $Q_n(x)$ \cite[equations~(2.5)--(2.7)]{BenRomdhane}.

For a partition $\lambda$ and $q \geq 3$ arbitrary, one can define the Wronskian polynomial $Q_\lambda(x)$ in analogy with the case $q=2$:
\[
Q_{\lambda}(x) := \frac{\Wr[Q_{n_1}(x), Q_{n_2}(x), \dots, Q_{n_r}(x)]}{\Delta(n_\lambda)},
\]
where $n_\lambda = (n_1, n_2, \dots, n_r)$ is the degree vector of $\lambda$.

These polynomials were studied in \cite[Section~7]{BonneuxCoefficients} and in~\cite{BonneuxP}. For $q = 3$, $Q_\lambda(x)$ can be used to define the Yablonskii--Vorobiev polynomials, which give rational solutions to the second Painlev\'e equation. Most of the properties of Wronskian Hermite polynomials generalize to arbitrary~$q$. In particular, $Q_\lambda(x)$ are monic integer polynomials with coefficients determined by the characters of the symmetric group.
\begin{Theorem}[{\cite[Theorem~7]{BonneuxCoefficients}}]\label{thm:bonneuxP}
Let $\lambda \vdash n$. Then
\[
Q_\lambda(x) = \sum_{k=0}^{\lfloor n/q \rfloor} (-1)^k H(\lambda)\frac{\chi^\lambda\big(q^k1^{n-qk}\big)}{q^kk!(n-qk)!}x^{n-qk}.
\]
\end{Theorem}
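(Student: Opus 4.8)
The plan is to identify $Q_\lambda(x)$ with the image of the Schur function $s_\lambda$ under a suitable ring homomorphism on symmetric functions, and then to read off the coefficients from the power-sum expansion of $s_\lambda$. Let $\Lambda$ denote the ring of symmetric functions with coefficients in $\mathbb{Q}[x]$, and let $\phi\colon \Lambda \to \mathbb{Q}[x]$ be the ring homomorphism determined on the power-sum generators by $\phi(p_1) = x$, $\phi(p_q) = -1$, and $\phi(p_j) = 0$ for $j \notin \{1,q\}$; this is well defined since the $p_j$ are algebraically independent. The central claim is that
\[
Q_\lambda(x) = H(\lambda)\,\phi(s_\lambda).
\]

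To prove the claim I would use that $\{Q_n\}$ is an Appell sequence, so $Q_n^{(j-1)}(x) = \frac{n!}{(n-j+1)!}Q_{n-j+1}(x)$ (with $Q_m := 0$ for $m < 0$). The Wronskian $\Wr[Q_{n_1},\dots,Q_{n_r}]$ is the determinant whose $(j,i)$-entry is $Q_{n_i}^{(j-1)}(x)$, and pulling the factor $n_i!$ out of the $i$-th column gives
\[
\Wr[Q_{n_1},\dots,Q_{n_r}] = \Big(\prod_{i=1}^r n_i!\Big)\det\Big[\frac{Q_{n_i-j+1}(x)}{(n_i-j+1)!}\Big]_{i,j=1}^r,
\]
with the convention $Q_m(x)/m! := 0$ for $m < 0$. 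On the other hand, the generating function of the complete homogeneous symmetric functions gives
\[
\sum_{m\ge0}\phi(h_m)\,t^m = \exp\Big(\phi(p_1)t + \phi(p_q)\frac{t^q}{q}\Big) = \exp\Big(xt - \frac{t^q}{q}\Big) = \sum_{n\ge0}Q_n(x)\frac{t^n}{n!},
\]
so $\phi(h_m) = Q_m(x)/m!$ for $m \ge 0$ and $\phi(h_m) = 0$ for $m < 0$. Hence the determinant above is $\phi$ applied to $\det[h_{n_i-j+1}]_{i,j=1}^r$; reversing both the row order and the column order (which contributes two sign changes $(-1)^{\binom{r}{2}}$ that cancel) turns this into the Jacobi--Trudi determinant $\det[h_{\lambda_i-i+j}]_{i,j=1}^r = s_\lambda$, using that the degree vector $n_\lambda$ is, read in reverse, the sequence $(\lambda_i+r-i)_{i=1}^r$. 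Dividing by $\Delta(n_\lambda)$ and invoking \eqref{eq:hnfactorial} yields $Q_\lambda(x) = \frac{\prod_i n_i!}{\Delta(n_\lambda)}\phi(s_\lambda) = H(\lambda)\,\phi(s_\lambda)$.

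With the claim established, I would finish by substituting the classical expansion $s_\lambda = \sum_{\mu\vdash n} z_\mu^{-1}\chi^\lambda(\mu)\,p_\mu$ (a reformulation of the Frobenius character formula, Theorem~\ref{thm:frobenius}), where $z_\mu$ is the order of the centralizer in $S_n$ of a permutation of cycle type $\mu$, and applying $\phi$. Since $\phi(p_\mu) = \prod_i\phi(p_{\mu_i})$ vanishes unless every part of $\mu$ lies in $\{1,q\}$, the only surviving partitions are $\mu^{(k)} = (q^k1^{n-qk})$ for $0 \le k \le \lfloor n/q\rfloor$, and for these $\phi(p_{\mu^{(k)}}) = (-1)^k x^{n-qk}$ while $z_{\mu^{(k)}} = q^k k!\,(n-qk)!$. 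Thus $\phi(s_\lambda) = \sum_{k=0}^{\lfloor n/q\rfloor}(-1)^k\frac{\chi^\lambda(q^k1^{n-qk})}{q^k k!\,(n-qk)!}x^{n-qk}$, and multiplying by $H(\lambda)$ gives exactly the stated formula.

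The step I expect to be the main obstacle is the reindexing in the Jacobi--Trudi identification: one must verify carefully that reversing the rows and columns of $\det[h_{n_i-j+1}]$ produces precisely $\det[h_{\lambda_i-i+j}]$, i.e., that the entries of $n_\lambda$ in reverse order are the staircase-shifted parts $\lambda_i+r-i$, and that the two sign changes genuinely cancel. One also has to keep the negative-index conventions consistent so that the identity $\phi(h_m) = Q_m(x)/m!$ faithfully matches the Wronskian entries to the symmetric-function entries even where derivative orders or subscripts run negative; once these conventions are aligned, the remaining manipulations are routine.
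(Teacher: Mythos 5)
The paper does not prove this statement at all: Theorem~\ref{thm:bonneuxP} is imported verbatim from \cite[Theorem~7]{BonneuxCoefficients}, so there is no in-paper proof to compare against. Judged on its own, your argument is correct and complete, and it is essentially the mechanism used in the cited sources (\cite{BonneuxCoefficients,BonneuxAppell}): the Appell property turns the Wronskian matrix into a Jacobi--Trudi matrix in the images $\phi(h_m)=Q_m(x)/m!$, the specialization $p_1\mapsto x$, $p_q\mapsto -1$, $p_j\mapsto 0$ otherwise is forced by matching the generating function $\exp\big(xt-\frac{t^q}{q}\big)$, and the power-sum expansion $s_\lambda=\sum_\mu z_\mu^{-1}\chi^\lambda(\mu)p_\mu$ then kills every $\mu$ with a part outside $\{1,q\}$, leaving exactly the classes $\big(q^k1^{n-qk}\big)$ with $z_{\mu^{(k)}}=q^kk!(n-qk)!$. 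The two points you flagged as delicate do check out: with $n_i=\lambda_{r-i+1}+i-1$ one gets $n_{r+1-i}-(r+1-j)+1=\lambda_i-i+j$, the two reversals each contribute $(-1)^{\binom{r}{2}}$ and cancel, and the conventions $h_m=0$ and $Q_m/m!=0$ for $m<0$ agree with the vanishing of $Q_{n_i}^{(j-1)}$ for $j-1>n_i$. Dividing by $\Delta(n_\lambda)$ and using \eqref{eq:hnfactorial} to recognize $H(\lambda)=\prod_i n_i!/\Delta(n_\lambda)$ completes the identification $Q_\lambda=H(\lambda)\phi(s_\lambda)$, from which the stated coefficient formula follows. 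No gaps.
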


Therefore it is natural to ask if Theorem~\ref{thm:main} generalizes, for example with $K_{\lambda'(2^k1^{n-2k})}$ replaced by $K_{\lambda'(q^k1^{n-qk})}$. We show here that the generalization only partially holds: enough to imply Corollary~\ref{cor:real_roots} for any $q$, but the coefficients do not have a simple form.

As in Lemma~\ref{lem:basechange}, we start with the change of basis formula:
\begin{Lemma}\label{lem:pbasechange}
Let $\lambda \vdash n$ and write $Q_\lambda(x)=\sum_{j=0}^{\lfloor\frac{n}{q}\rfloor} a_{j} x^{n-qj}$. Then $Q_\lambda(x) = \sum_{k=0}^{\lfloor\frac{n}{q}\rfloor} b_{k} Q_{n-qk}(x)$ where
\[
b_{k} = \sum_{j=0}^{k}\frac{(n-qj)!}{q^{k-j}(k-j)!(n-qk)!} a_{j}, \qquad 0 \leq k \leq \left\lfloor\frac{n}{q}\right\rfloor.
\]
\end{Lemma}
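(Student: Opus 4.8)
The plan is to mimic the proof of Lemma~\ref{lem:basechange} verbatim, replacing the Hermite generating function by the generating function $\exp\big(xt-\frac{t^q}{q}\big)$ and replacing the power $\frac{t^2}{2}$ by $\frac{t^q}{q}$ throughout. First I would establish the inversion formula expressing $x^m$ in the $Q$-basis. Since $\sum_{m\ge0}Q_m(x)\frac{t^m}{m!}=\exp(xt)\exp\big(-\frac{t^q}{q}\big)$, multiplying by $\exp\big(\frac{t^q}{q}\big)=\sum_{k\ge0}\frac{t^{qk}}{q^kk!}$ gives $\exp(xt)=\sum_{m\ge0}\frac{x^m}{m!}t^m=\big(\sum_{m}Q_m(x)\frac{t^m}{m!}\big)\big(\sum_k\frac{t^{qk}}{q^kk!}\big)$; comparing the coefficient of $t^m$ yields
\[
x^m = \sum_{k=0}^{\lfloor m/q\rfloor}\frac{m!}{q^kk!(m-qk)!}Q_{m-qk}(x).
\]

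Next I would substitute this expansion into $Q_\lambda(x)=\sum_{j=0}^{\lfloor n/q\rfloor}a_j x^{n-qj}$, replacing each $x^{n-qj}$ by $\sum_{i=0}^{\lfloor(n-qj)/q\rfloor}\frac{(n-qj)!}{q^ii!(n-qj-qi)!}Q_{n-qj-qi}(x)$. Then I collect terms with a common index $Q_{n-qk}(x)$ by setting $k=j+i$, so $i=k-j$ and $j$ ranges over $0\le j\le k$ (with the constraint $k\le\lfloor n/q\rfloor$ automatically respected). This gives
\[
Q_\lambda(x)=\sum_{k=0}^{\lfloor n/q\rfloor}\left(\sum_{j=0}^{k}\frac{(n-qj)!}{q^{k-j}(k-j)!(n-qk)!}a_j\right)Q_{n-qk}(x),
\]
which is exactly the claimed formula for $b_k$.

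The only subtlety — and it is genuinely minor — is checking that the $d$-symmetry $Q_n(\omega_q z)=\omega_q^nQ_n(z)$ forces $Q_\lambda(x)$ to be a genuine sum over powers $x^{n-qj}$ (so that the base-change is between the spaces $\mathrm{span}\{x^{n-qj}\}$ and $\mathrm{span}\{Q_{n-qk}\}$ of matching dimension), but this is already guaranteed by Theorem~\ref{thm:bonneuxP}, which exhibits $Q_\lambda(x)$ as precisely such a sum. There is no real obstacle here: the argument is a formal manipulation of generating functions identical in structure to Lemma~\ref{lem:basechange}, and the index bookkeeping is the same Cauchy-product rearrangement. I would write the proof as a one-paragraph computation paralleling the proof of Lemma~\ref{lem:basechange}, citing \cite[Chapter~11, Section~110]{Rainville} or simply the generating-function identity above for the inversion formula.
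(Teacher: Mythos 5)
Your proposal is correct and is exactly the computation the paper intends: the paper omits the proof of Lemma~\ref{lem:pbasechange}, stating only that it is similar to that of Lemma~\ref{lem:basechange}, and your argument is precisely that analogue (deriving $x^m=\sum_{k}\frac{m!}{q^kk!(m-qk)!}Q_{m-qk}(x)$ from the generating function and performing the same Cauchy-product reindexing with $k=i+j$). Nothing is missing.
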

The proof is similar to that of Lemma~\ref{lem:basechange} so we omit it.

Now once more we have to evaluate the expression obtained by replacing
\[ a_{j}=(-1)^j \frac{H(\lambda)}{q^jj!(n-qj)!}\chi^\lambda\big(q^j1^{n-qj}\big)\] in $b_{k}$. Thus we are led to the evaluation of the sum $S_{q, k}^\lambda = \sum_{j=0}^k (-1)^j \binom{k}{j} \chi^\lambda\big(q^j1^{n-qj}\big)$. There is no analogue to Lemma~\ref{lem:kostka}. The most we can say is the following.
\begin{Lemma}\label{lem:gen_kostka}
Let $q \geq 2, \lambda \vdash n$ and $0 \leq k \leq \big\lfloor\frac{n}{q}\big\rfloor$. Then $S_{q, k}^\lambda \geq 0$.
\end{Lemma}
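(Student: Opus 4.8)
\textbf{Proof proposal for Lemma~\ref{lem:gen_kostka}.}
The plan is to mimic the first part of the proof of Lemma~\ref{lem:kostka}: translate the alternating character sum $S_{q,k}^\lambda$ into a coefficient extraction via the Frobenius character formula, perform the binomial summation, and then argue that the resulting polynomial has nonnegative coefficients in the monomial basis $M_\rho$, so in particular the coefficient of $M_\lambda$ is nonnegative. First I would add parts of size $0$ so that $\ell(\lambda)=n$ and work with $n$ variables $x_1,\dots,x_n$. By Theorem~\ref{thm:frobenius}, $\chi^\lambda(q^j1^{n-qj})$ is the coefficient of $M_\lambda := x_1^{\lambda_1+n-1}\cdots x_n^{\lambda_n}$ in $p_{q^j1^{n-qj}}W_0 = p_q^j p_1^{n-qj} W_0$. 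Hence $S_{q,k}^\lambda$ is the coefficient of $M_\lambda$ in
\[
\sum_{j=0}^k(-1)^j\binom{k}{j}p_q^j p_1^{n-qj}W_0 = W_0\, p_1^{n-qk}\sum_{j=0}^k(-1)^j\binom{k}{j}p_q^j p_1^{q(k-j)} = W_0\, p_1^{n-qk}\big(p_1^q - p_q\big)^k.
\]

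The crux is then to show that $W_0\, p_1^{n-qk}\big(p_1^q-p_q\big)^k$, when expanded as a linear combination of the alternating monomials $M_\rho = \det[x_i^{\rho_j+n-j}]$ (equivalently, of the $W_\rho$), has nonnegative coefficients. For $q=2$ this followed from $p_1^2-p_2 = 2e_2$ together with Pieri's rule; for general $q$ no such clean identity holds, so I would instead use the Murnaghan--Nakayama rule (Theorem~\ref{thm:murnaghan}) directly. The key observation is that $p_1^q-p_q$, and more generally $\big(p_1^q-p_q\big)^k p_1^{n-qk}$, acts on any Schur polynomial $s_\nu$ by repeated application of $p_1$ (which adds a single box, always with sign $+1$) and of the combination $p_1^q - p_q$. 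So the plan reduces to showing that for every partition $\nu$,
\[
\big(p_1^q - p_q\big)s_\nu = \sum_{\rho} c_{\nu\rho}\, s_\rho \qquad\text{with } c_{\nu\rho}\ge 0 .
\]
Indeed, $p_1^q s_\nu$ adds $q$ boxes one at a time, yielding $\sum_\rho (\text{number of standard-ish addition paths})\, s_\rho$ with nonnegative integer coefficients; while by Murnaghan--Nakayama $p_q s_\nu = \sum_{\rho\in\mathcal R(\nu,q)}(-1)^{ht(\rho/\nu)}s_\rho$, where the set $\mathcal R(\nu,q)$ consists of partitions obtained from $\nu$ by adding a $q$-ribbon. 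The point is that a $q$-ribbon of height $h$ can also be obtained by adding $q$ single boxes in $h+1$ distinct "staircase" orders (one for each choice of which rows to switch on), so the coefficient of $s_\rho$ in $p_1^q s_\nu$ for such $\rho$ is at least $h+1 \ge 1 \ge (-1)^{ht(\rho/\nu)}$ if $h$ is even, and is $\ge 1 \ge 0 > -1 = (-1)^{ht(\rho/\nu)}$ with room to spare if $h$ is odd — in all cases the coefficient of $s_\rho$ in $p_1^q s_\nu$ dominates that of $s_\rho$ in $p_q s_\nu$. This gives $c_{\nu\rho}\ge 0$, and iterating over the $k$ factors $\big(p_1^q-p_q\big)$ and the remaining $n-qk$ factors of $p_1$ keeps all coefficients nonnegative. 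Since $s_\rho = W_\rho/W_0$, the polynomial $W_0\,p_1^{n-qk}\big(p_1^q-p_q\big)^k = \sum_\rho (\ge 0) W_\rho$, and reading off the coefficient of $M_\lambda$ (which, as in Lemma~\ref{lem:kostka}, receives a contribution only from terms $W_\rho$, and always with a nonnegative coefficient) gives $S_{q,k}^\lambda \ge 0$.

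The main obstacle I anticipate is making the combinatorial domination argument fully rigorous: precisely counting the multiplicity of $s_\rho$ in $p_1^q s_\nu$ (this is a sum of products of Pieri coefficients, i.e.\ the number of chains $\nu = \nu^{(0)} \subset \nu^{(1)} \subset \cdots \subset \nu^{(q)} = \rho$ with each step a single box) and comparing it cleanly with the single $\pm 1$ coefficient coming from Murnaghan--Nakayama when $\rho/\nu$ happens to be a $q$-ribbon. One clean way to organize this is the standard fact that $p_1^q = \sum_{\mu\vdash q} f^\mu s_\mu$ acting on Schur functions, combined with $s_\mu = \sum \pm s_\rho$ expansions; alternatively, one can avoid Schur functions entirely and run the whole computation in the "infinitely many variables" ring of symmetric functions, invoking the standard nonnegativity $p_1^q - p_q = \sum_{\mu\vdash q,\ \mu\ne(q)} (\text{something})\, e_\mu$-type expansions — but the ribbon-versus-staircase comparison above is the most transparent route. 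Once this lemma on $(p_1^q-p_q)s_\nu$ is in hand, the rest is bookkeeping identical in spirit to Lemma~\ref{lem:kostka}, and Lemma~\ref{lem:gen_kostka} follows, which (together with Lemma~\ref{lem:pbasechange} and the argument of Corollary~\ref{cor:real_roots}) is exactly what is needed to extend the real/imaginary root bound to all $q$.
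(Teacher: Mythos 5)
Your proposal is correct and follows essentially the same route as the paper: reduce via the Frobenius formula to the coefficient of $M_\lambda$ in $W_0\,p_1^{n-qk}\big(p_1^q-p_q\big)^k$, then show $\big(p_1^q-p_q\big)s_\nu$ is a nonnegative combination of Schur polynomials by comparing the Pieri-type multiplicity of $s_\rho$ in $p_1^q s_\nu$ (the number of one-box-at-a-time addition chains from $\nu$ to $\rho$) with the single $\pm1$ from Murnaghan--Nakayama, and iterate. The only inaccuracy is your claim that a $q$-ribbon of height $h$ admits $h+1$ such addition orders --- the correct count is $\binom{q-1}{h}$, which equals $1$ for a vertical ribbon --- but since the argument only needs this count to be at least $1$, the conclusion is unaffected.
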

\begin{proof}Set $\mu^{(q, j)} := \big(q^j1^{n-qj}\big)$ for $0 \leq j \leq k$.

Proceeding as in the proof of Lemma~\ref{lem:kostka}, with the same notations, we must determine the coefficient of the monomial $M_\lambda$ in the polynomial
\begin{align*}
\sum_{j=0}^k(-1)^j\binom{k}{j}p_{\mu^{(q, j)}}W_0 &= \sum_{j=0}^k(-1)^j\binom{k}{j} p_q^j p_1^{n-qj}W_0 \\
&= W_0 p_1^{n-qk} \sum_{j=0}^k (-1)^j\binom{k}{j}p_q^jp_1^{q(k-j)} \\
&= W_0 p_1^{n-qk} \big(p_1^q - p_q\big)^k.
\end{align*}

Let us try to interpret $p_1^{n-qk}\big(p_1^q - p_q\big)^k$ combinatorially.

Let $\rho$ be any partition. Then from Theorem~\ref{thm:murnaghan},
\[
p_1^qs_\rho = \sum_{|\gamma/\rho| = q} \alpha_\gamma s_\gamma,
\]
where $\alpha_\gamma$ counts the number of ways one can obtain $\gamma$ from $\rho$ by adding $q$ labelled squares to the Ferrers diagram $D_\rho$. Note that these squares need not form a hook or be connected.

Similarly,
\[
p_qs_\rho = \sum_{\gamma \in \mathcal{R}(\rho, q)}(-1)^{ht(\gamma/\rho)} s_\gamma.
\]

Adding these two, we obtain $\big(p_1^q - p_q\big)s_\rho = \sum_{|\gamma/\rho| = q} \beta_\gamma s_\gamma$, where
\[
\beta_\gamma =
\begin{cases}
\alpha_\gamma, & \textrm{if $\gamma/\rho$ is not a $q$-hook},\\
\alpha_\gamma + 1, & \textrm{if $\gamma/\rho$ is a $q$-hook spanning an even number of rows},\\
\alpha_\gamma - 1, & \textrm{if $\gamma/\rho$ is a $q$-hook spanning an odd number of rows}.
\end{cases}
\]
In particular, $\beta_\gamma \geq 0$.

Starting from the Schur polynomial $s_0 = 1$ for the partition with only $0$ parts and length $n$, and applying the above, we obtain:
\[
p_1^{n-qk}\big(p_1^q - p_q\big)^kW_0 = \sum_{\substack{\rho\\\ell(\rho)=n}}K_\rho s_\rho W_0 = \sum_{\substack{\rho\\\ell(\rho)=n}}K_\rho W_\rho,
\]
where $K_\rho \geq 0$. Exactly as in the proof of Lemma~\ref{lem:kostka} one can now conclude that the coefficient of $M_\lambda$ is $K_\lambda$, and so $S_{q, k}^\lambda = K_\lambda$.
\end{proof}
However, for $q \geq 3$ the coefficients $K_\rho$ that appear in the proof above are no longer related to the Kostka numbers. In the case $q = 2$ there are only two types of $2$-hooks: horizontal $2$-hooks ($2$ squares in the same row), and vertical $2$-hooks ($2$ squares in the same column). Then $\beta_\gamma = 0$ when $\gamma/\rho$ is an horizontal $2$-hook. This property explains why $K_\rho$ corresponds to a Kostka number.

Nevertheless, knowing that the expansion in the $\{Q_n(x)\}_{n \geq 0}$ basis has non-negative coefficients implies the analogue of Corollary~\ref{cor:real_roots}.
\begin{Corollary}Let $q \geq 2$ and $\lambda \vdash n$. If $z$ is a root of $Q_\lambda(x)$ located on the $2q$-star then $|z| \leq x_n$, where $x_n$ is the largest real root of $Q_n(x)$.
\end{Corollary}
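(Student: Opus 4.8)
The plan is to mimic the proof of Corollary~\ref{cor:real_roots}, using the $d$-symmetry of the $Q$-polynomials to collapse every point of the $2q$-star onto the positive real axis, where non-negativity of the expansion coefficients finishes the job. Throughout write $Q_\lambda(x) = \sum_k b_k Q_{n-qk}(x)$ as in Lemma~\ref{lem:pbasechange}; by Lemma~\ref{lem:gen_kostka} all $b_k \ge 0$, and $b_0 = 1$ because $Q_\lambda$ and every $Q_m$ are monic.

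I would first record a consequence of the cited facts about $Q_n$: every zero of every $Q_m$ with $1 \le m \le n$ has modulus at most $x_n$, so $Q_m(s) > 0$ for all real $s > x_n$. Indeed, the zeros of $Q_n$ lie on the $q$-star and, by $Q_n(\omega_q z) = \omega_q^n Q_n(z)$, are permuted by multiplication by $\omega_q$; the orbit of a non-zero zero has $q$ elements and meets $[0,\infty)$ in exactly one point, so the largest modulus of a zero of $Q_n$ equals its largest real zero $x_n$. Since the overall largest-modulus zero of $Q_1, \dots, Q_n$ lies in $Q_n$, the bound $|z_0| \le x_n$ holds for every zero $z_0$ of every $Q_m$ with $m \le n$; then $Q_m(s) = \prod (s - z_0)$ is a product of positive factors for real $s > x_n$ (the real zeros lie below $s$, the non-real ones occur in conjugate pairs).

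Now let $z$ be a root of $Q_\lambda$ on the $2q$-star; if $z = 0$ there is nothing to prove, so write $z = s\eta$ with $s = |z| > 0$ and $\eta^{2q} = 1$, whence $\eta^q = \pm 1$. Note that $Q_\lambda(\omega x) = \omega^n Q_\lambda(x)$ for every $q$-th root of unity $\omega$, because $Q_\lambda$ involves only monomials $x^{n-qk}$. If $\eta^q = 1$, then $\eta$ is a $q$-th root of unity and $0 = Q_\lambda(z) = \eta^n Q_\lambda(s)$; were $s > x_n$ we would get $Q_\lambda(s) = \sum_k b_k Q_{n-qk}(s) \ge Q_n(s) > 0$, a contradiction, so $|z| \le x_n$. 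If $\eta^q = -1$, set $\omega_{2q} := \exp\big(\frac{\pi {\rm i}}{q}\big)$ and $\eta' := \eta \omega_{2q}^{-1}$, so that $\eta'^q = 1$; the same identity reduces to $Q_\lambda(s\omega_{2q}) = 0$. Writing $Q_\lambda(x) = \sum_j a_j x^{n-qj}$ with $a_j$ as in Theorem~\ref{thm:bonneuxP} and using $\omega_{2q}^q = -1$, so $\omega_{2q}^{n-qj} = (-1)^j \omega_{2q}^{n}$, we obtain $Q_\lambda(s\omega_{2q}) = \omega_{2q}^{n}\widetilde Q_\lambda(s)$, where $\widetilde Q_\lambda(x) := \sum_j (-1)^j a_j x^{n-qj}$. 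Hence $s > 0$ is a real root of $\widetilde Q_\lambda$.

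The remaining task -- bounding the real roots of $\widetilde Q_\lambda$ -- is the crux. By Theorem~\ref{thm:bonneuxP} the sign $(-1)^j$ cancels, giving $\widetilde Q_\lambda(x) = \sum_j H(\lambda)\frac{\chi^\lambda\big(q^j1^{n-qj}\big)}{q^j j!\,(n-qj)!}\,x^{n-qj}$; running the change of basis of Lemma~\ref{lem:pbasechange} verbatim yields $\widetilde Q_\lambda(x) = \sum_k \widetilde b_k Q_{n-qk}(x)$ with $\widetilde b_k = \frac{H(\lambda)}{q^k k!\,(n-qk)!}\sum_{j=0}^k \binom{k}{j}\chi^\lambda\big(q^j1^{n-qj}\big)$, and $\widetilde b_0 = H(\lambda)F_\lambda/n! = 1$ by~\eqref{eq:hnfactorial}. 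I claim $\widetilde b_k \ge 0$, which is the exact analogue of Lemma~\ref{lem:gen_kostka}: after padding $\lambda$ with zero parts to length $n$ and working in $n$ variables, the Frobenius formula (Theorem~\ref{thm:frobenius}) identifies $\sum_j \binom{k}{j}\chi^\lambda\big(q^j1^{n-qj}\big)$ with the coefficient of the monomial $M_\lambda$ of Lemma~\ref{lem:gen_kostka} in $W_0\, p_1^{n-qk}\big(p_1^q + p_q\big)^k$. Now $\big(p_1^q + p_q\big)s_\rho = \sum_{|\gamma/\rho| = q} \beta_\gamma^+ s_\gamma$, where (combining Theorem~\ref{thm:murnaghan} with the Pieri count $\alpha_\gamma$ from Lemma~\ref{lem:gen_kostka}) $\beta_\gamma^+ = \alpha_\gamma$ unless $\gamma/\rho$ is a $q$-hook, in which case $\beta_\gamma^+ = \alpha_\gamma + (-1)^{ht(\gamma/\rho)}$; and $\alpha_\gamma \ge 1$ whenever $|\gamma/\rho| = q$, since $\gamma \supsetneq \rho$ always has a removable square inside $\gamma/\rho$, so the $q$ boxes can be added one at a time. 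Thus every $\beta_\gamma^+ \ge 0$, so $p_1^{n-qk}\big(p_1^q + p_q\big)^k$ is Schur-positive, and by the same monomial-extraction argument as in Lemma~\ref{lem:gen_kostka} the coefficient of $M_\lambda$ comes only from $\rho = \lambda$ and is $\ge 0$, giving $\widetilde b_k \ge 0$. Finally, with $\widetilde b_k \ge 0$ and $\widetilde b_0 = 1$, $s > x_n$ would force $\widetilde Q_\lambda(s) = \sum_k \widetilde b_k Q_{n-qk}(s) \ge Q_n(s) > 0$, contradicting $\widetilde Q_\lambda(s) = 0$; hence $|z| = s \le x_n$. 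The one genuine obstacle is this positivity of the $\widetilde b_k$ (i.e., Schur-positivity of $p_1^q + p_q$ acting on Schur functions); everything else is bookkeeping with roots of unity, exactly as for Corollary~\ref{cor:real_roots}.
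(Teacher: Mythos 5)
Your proof is correct, and for half of the argument it takes a genuinely different route from the one the paper sketches. For roots on the $q$-star both arguments coincide: $d$-symmetry collapses the root onto the positive real axis, and Lemma~\ref{lem:gen_kostka} plus the cited facts from~\cite{BenRomdhane} give $Q_\lambda(s)\ge Q_n(s)>0$ for $s>x_n$. The divergence is in how the remaining rays of the $2q$-star are handled. The paper's (omitted) proof invokes the external identity $Q_{\lambda}(x)=(-\omega_{2q})^nQ_{\lambda'}\big({-}\omega_{2q}^{-1}x\big)$ from~\cite{BonneuxAppell} to transfer such roots to $Q_{\lambda'}$, exactly as Corollary~\ref{cor:real_roots} transfers purely imaginary roots of $\He_\lambda$ to real roots of $\He_{\lambda'}$. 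You instead reduce to the sign-flipped polynomial $\widetilde Q_\lambda(x)=\sum_j(-1)^ja_jx^{n-qj}$ and prove a second positivity lemma, namely that $\sum_{j=0}^k\binom{k}{j}\chi^\lambda\big(q^j1^{n-qj}\big)\ge 0$ via Schur-positivity of $p_1^{n-qk}\big(p_1^q+p_q\big)^k$; your verification of this (the count $\alpha_\gamma=f^{\gamma/\rho}\ge 1$ absorbs the $\pm 1$ from the Murnaghan--Nakayama term) is the same technique as Lemma~\ref{lem:gen_kostka} and is sound. What your route buys is self-containedness and uniformity in $q$: since $-\omega_{2q}^{-1}=\omega_{2q}^{q-1}$ shifts the ray index by the \emph{even} number $q-1$ when $q$ is odd, the conjugation identity (even combined with $q$-symmetry) maps the "odd" rays of the $2q$-star back to odd rays for odd $q$, so the paper's one-line recipe is not obviously sufficient in that case, whereas your argument covers all $q\ge 2$ at the cost of one extra (but routine) positivity lemma.
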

The proof uses Lemma~\ref{lem:gen_kostka} together with the property $Q_{\lambda}(x) = (-\omega_{2q})^nQ_{\lambda'}\big({-}\omega_{2q}^{-1}x\big)$ (see \cite[Section~7.2]{BonneuxAppell}), and we omit it.

Note that for $q=2$ we recover Corollary~\ref{cor:real_roots}.

\subsection*{Acknowledgements}

The authors are indebted to the referees for the careful reading and for suggesting to extend the results to $q \geq 3$.

\pdfbookmark[1]{References}{ref}
\LastPageEnding

\end{document}